\newtheorem{theorem}{Theorem}
\newtheorem{lemma}{Lemma}
\newtheorem{remark}{Remark}
\newtheorem{definition}{Definition}
\newcommand{\expected}{\lambda(u,T)}
\newcommand{\matrixalpha}{\mathbf{\Psi}}
\newcommand{\ci}{\mathcal{J}}
\newcommand{\bu}{\mathbf{u}}
\newcommand{\bv}{\mathbf{v}}
\newcommand{\dr}{\dot{r}}
\newcommand{\ddr}{\ddot{r}}
\newcommand{\dx}{\dot{X}}
\newcommand{\C}{\mathcal{C}}
\newcommand\var{\operatorname{\mathbf{var}}}
\newcommand\cov{\operatorname{\mathbf{cov}}}
\newcommand\cor{\operatorname{\mathbf{cor}}}
\newcommand{\R}{\mathbb{R}}
\newcommand{\X}{\mathcal{X}}
\newcommand\E{{\operatorname{\mathbf{E}}}}
\begin{document}
\title{Asymptotic normality of high level-large time crossings of a Gaussian process}
\author{
F. Dalmao\thanks{
DMEL, 
Universidad de la Rep\'{u}blica, Salto, Uruguay. 
E-mail: fdalmao@unorte.edu.uy.}
\qquad
J. R. Le\'{o}n\thanks{IMERL, Universidad de la Rep\'{u}blica, Montevideo, Uruguay and and Escuela de Matem\'{a}tica. Facultad de Ciencias. 
Universidad Central de Venezuela, Caracas, Venezuela. 
E-mail: jose.leon@ciens.ucv.ve }
\qquad
E. Mordecki\thanks{CMAT, 
Universidad de la Rep\'{u}blica, Montevideo, Uruguay. 
E-mail: mordecki@cmat.edu.uy.}
\qquad 
S. Mourareau\thanks{LAMA, 
Universit\'e Paris-Est Marne la Vall\'ee, France. 
E-mail: stephane.mourareau@u-pem.fr}
}

\maketitle

\begin{abstract}
We prove the asymptotic normality of the standardized number of crossings of a 
centered stationary mixing Gaussian process 
when both the level and the time horizon go to infinity 
in such a way that the expected number 
of crossings also goes to infinity. \\
\end{abstract}

AMS2000 Classifications. Primary 60F05. Secondary 60G15.

Key words: high-level crossings, Rice formula, mixing process, dependent CLT.

\section{Introduction}
The number of crossings of a stochastic process through a level over a time interval 
gives important information about the geometry of the trajectories and, 
for large values of the level, about the behavior of the tail of the maximum of the process. 
Since the exact distribution of this functional is not known but in very particular cases,
the study of its asymptotic behavior, under different asymptotic schemes, 
has become a classical subject of research.

In the case when the level $u$ and the time horizon $T$ both go to infinity in such a way that
the expectation of the number of crossings through $u$ on $[0,T]$ remains fixed, the classical results
by Volkonski\u{\i} and Rozanov state that the conveniently normalized number of up-crossings
asymptotically behaves like a standardized Poisson process (\cite{vr1},\cite{vr2}) where the intensity 
is the constant expectation of the number of up-crossings.
Since the standardized Poisson distribution approximates the normal distribution when this intensity tends to infinity, 
it is natural to ask for the asymptotic normality when 
its intensity 
tends to infinity.

A classical way to prove asymptotic normality is based on the computation
of different moments of the underlying random variables.
In the case of the number of crossings of a smooth stochastic process, 
this task can be carried out with the help of Rice formulas (\cite{rice}). 
However, the explicit computation and the analysis of the asymptotic behavior of moments of higher order than the second 
is in general a difficult task, in particular taking into account that the level is not fixed.
A general picture of the field can be found in the books by Cram\'er and Leadbetter \cite{cl} and Aza\"is and Wschebor \cite{aw}.
An alternative approach to prove asymptotic normality consists in the study of the chaotic expansion of the crossings in the Wiener space (see for instance \cite{slud:1991}),
using the corresponding limit theorem results as the one exposed in the books
by Peccati and Taqqu \cite{pt} or Nourdin and Peccati \cite{np}. 
Wiener chaos techniques have the advantage of avoiding higher moments than the second 
and, sometimes, of giving rates of convergence.
The first results in this direction were obtained by Malevi\v{c} \cite{mal}, Cuzick \cite{c}, Slud \cite{slud} and 
Kratz and Le\'on  \cite{kl} where 
-within other results- 
the normal asymptotic behavior of the standardized number of crossing of a smooth stationary Gaussian process is obtained for a fixed level $u$ as the time horizon goes to infinity. 
In order to obtain this result,  the chaotic expansion of the 
number of 
crossings and the approximation of the process by $m$-dependent processes are used. 
In his recent Phd thesis \cite{m},
Mourareau analyzes the chaos expansion of the crossings in the case where the level, 
the time horizon and the mean number of crossings go to infinity, 
obtaining the asymptotic normality of the normalized number of crossings for an $m$-dependent Gaussian process. 
However, the usual scheme of translating this approximation to 
more general Gaussian processes 
is not carried out. 
Mourareau's work points out the sophisticated nature of this situation. 
While most of the asymptotic distributions found via the Wiener chaos techniques 
rely on the fact that 
some -possibly every- 
component of the chaotic expansion 
of the functional have variance of the same order than the global variance, 
in the present case the variance of each chaotic component of the number of crossings tends to zero. 

In the present paper, we study the normal asymptotic behavior of the normalized number of crossings 
of a class of stationary Gaussian processes 
when both the level and the observation time go to infinity, 
in such a way that the expectation of the number of crossings also goes to infinity (see Theorem \ref{teo}). 
The basic idea to obtain our result is to use the Bernstein block method \cite{bernstein} for dependent random variables
in the Central Limit Theorem, with the formulation presented in \cite{bdlr}.
In particular, this approach requires the analysis of the asymptotic behavior of the second and third moments of
the number of crossings over an increasing time interval, when the level of the crossings also goes 
to infinity in a regulated way. This task is accomplished with the help of the corresponding Rice formulas.
The computations of the third moment for zero level (i.e. roots) in the context of stationary random polynomials can be found in \cite{gw}, see also section 5.2 in \cite{aw}.
To our knowledge, the use of the Rice formula for the third factorial moment at an arbitrary level 
has not been used previously. In the way of our proof, we obtain the equivalence of the asymptotic behavior
of the expectation and the variance of the crossings under a very general asymptotic scheme (see Theorem \ref{key}).

We use the usual notations $f(t)\sim g(t)$ to indicate that $\lim\frac{f(t)}{g(t)}=1$; 
$f(t)=o(g(t))$ for $\lim\frac{f(t)}{g(t)}=0$. 

The rest of the paper is organized as follows: 
Section \ref{main} presents the problem, the main results 
and some motivating partial results. 
Section \ref{preliminaries} introduces some preliminary results. 
Section \ref{theorem1} contains the proof of Theorem 1, 
Section \ref{theorem2} the proof of Theorem 2 and
Section \ref{theorem3} the proof of Theorem 3.

%
\section{Main results}\label{main}
Assume that $\X=\{X(t)\colon t\in\R\}$ is a mean zero variance one stationary Gaussian process
with smooth paths. Denote the covariance function of $\X$ by 
$$r(\tau)=\E(X(t)X(t+\tau)).$$ 
Without loss of generality we assume that 
$r(0)=1$, and that
$\var\dx(0)=-{\ddr}(0)=1.$
Define the number of crossings through level $u$ by the process $\X$ over the time interval $I\subset\R$ by
\begin{equation*}
N(I,u)=\#\{t\in I\colon X(t)=u\}, 
\end{equation*}
and denote $N(T,u)=N([0,T],u)$. 
For simplicity of notation we always assume $u>0$.
Set 
\begin{align*}
C(u)&=\E\, N(1,u)={1\over \pi}\exp(-u^2/2),\\
\lambda(T,u)&=\E\, N(T,u)=TC(u)={T\over \pi}\exp(-u^2/2).
\end{align*}

We present now our main results. 
The first theorem states that 
when the level $u$ tends to infinity, 
uniformly in $T$ bounded away from zero, 
the mean and the variance of $N(T,u)$ 
are of the same order. 
Wiener chaos techniques are used in Proposition 7.4.2 of \cite{m} 
to prove a similar result 
under a more restrictive time-level asymptotic scheme.
Here we use an approach based on Rice formula.

\newpage
\begin{theorem}\label{key}
Assume that $\X$ satisfies Geman's condition:
\begin{itemize}
\item[{\rm (G)}] If ${r}(\tau)=1-\tau^2/2+\theta(\tau)$, then, for some $\delta>0$, 
the integral
$$
\int_0^{\delta}{\theta^{\,\prime}(\tau)\over\tau^2}d\tau<\infty.
$$ 
\end{itemize}
Furthermore, assume that $r(\tau)\to 0$, $\dr(\tau)\to 0$ as $\tau\to\infty$, 
and that the integral
\begin{equation}\label{eq:integrable}
\int_0^{\infty}\big(|r(\tau)|+|\dr(\tau)|+|\ddr(\tau)|\big)d\tau<\infty.
\end{equation}
Hence, for any fixed $t_0>0$,  
as $u$ tend to infinity, 
we have
\begin{equation}\label{eq:key}
{\var N(T,u)\over \E\, N(T,u)}\to 2,
\end{equation}
uniformly in $T\in[t_0,\infty)$. 
\end{theorem}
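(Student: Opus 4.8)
The plan is to compute both the mean and the variance via Rice-type formulas and compare their leading-order behavior as $u\to\infty$. The mean is already explicit: $\E\,N(T,u)=\lambda(T,u)=TC(u)$. For the second moment, the key tool is the Rice formula for the second factorial moment,
\begin{equation*}
\E\big(N(T,u)(N(T,u)-1)\big)=\int_0^T\int_0^T A_u(s,t)\,ds\,dt,
\end{equation*}
where $A_u(s,t)=\E\big(|\dx(s)||\dx(t)|\,\big|\,X(s)=X(t)=u\big)\,p_{X(s),X(t)}(u,u)$ is the second Rice intensity, and by stationarity the double integral reduces to $2\int_0^T (T-\tau)A_u(\tau)\,d\tau$ with $A_u(\tau)=A_u(0,\tau)$. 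Since $\var N(T,u)=\E\big(N(N-1)\big)+\E\,N-(\E\,N)^2$, the statement \eqref{eq:key} is equivalent to showing that
\begin{equation*}
\frac{1}{TC(u)}\Big(2\int_0^T(T-\tau)A_u(\tau)\,d\tau-T^2C(u)^2\Big)\to 1
\end{equation*}
uniformly in $T\ge t_0$.

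The heart of the matter is the asymptotic analysis of $A_u(\tau)$ as $u\to\infty$, and here the behavior splits into two regimes. For $\tau$ in a fixed neighborhood of $0$, Geman's condition (G) controls the conditional variance of $\dx$ given the two-point pinning and one expects, after a careful Taylor expansion of $r$, $\dr$, $\ddr$ near $0$, that $\int_0^\delta A_u(\tau)\,d\tau$ contributes a term of order $C(u)$ — this is the ``diagonal'' contribution that produces the extra $+\E\,N$, i.e. the factor $2$ rather than $1$. For $\tau$ bounded away from $0$, the integrability hypothesis \eqref{eq:integrable} and $r(\tau),\dr(\tau)\to 0$ let one compare the pinned density $p_{X(0),X(\tau)}(u,u)$ with the product $p_{X(0)}(u)p_{X(\tau)}(u)=C(u)^2\pi^2\cdot(\text{const})$; the difference is $O(C(u)^2)\cdot(|r(\tau)|+|\dr(\tau)|+|\ddr(\tau)|)$ uniformly, and integrating against $(T-\tau)$ and dividing by $TC(u)$ kills this term uniformly in $T\ge t_0$ because $C(u)\to 0$. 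Combining: $2\int_0^T(T-\tau)A_u(\tau)\,d\tau = T^2C(u)^2 + TC(u) + o(TC(u))$ uniformly in $T\ge t_0$, which is exactly what is needed.

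The main obstacle I expect is the uniform estimate on the diagonal block: one must expand the Gaussian conditional moment $\E(|\dx(0)||\dx(\tau)| \mid X(0)=X(\tau)=u)$ in the degenerate limit $\tau\to 0$, where the covariance matrix of $(X(0),X(\tau))$ becomes singular, and simultaneously track the dependence on the large parameter $u$. The standard trick is to regress $\dx(0),\dx(\tau)$ on $X(0),X(\tau)$, write $\dx(\tau)=\dx(0)+\int_0^\tau\ddot X$, and reduce the conditional expectation to a computation involving the conditional law of $\dx(0)$ and of the increment; the singularity in the density $p_{X(0),X(\tau)}(u,u)\sim (2\pi\tau)^{-1}\exp(-u^2/2)$ is exactly compensated by the conditional moment shrinking like $\tau$, and Geman's condition guarantees the integrability of the error terms near $0$ uniformly in $u$. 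Carrying this out carefully — and checking that all remainders are genuinely uniform in $T$ on $[t_0,\infty)$, which follows once every $\tau$-integral is shown to converge absolutely by \eqref{eq:integrable} — is the bulk of the work; the rest is bookkeeping with the explicit Gaussian formulas.
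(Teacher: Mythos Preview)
Your approach mirrors the paper's: Rice formula for the second factorial moment, stationarity reduction to a single $\tau$-integral, Gaussian regression of $\dx$ on $X$, and a near-diagonal/off-diagonal split. Two points need correction.

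First, your off-diagonal estimate is stated incorrectly. The difference $A_u(\tau)-C(u)^2$ is \emph{not} $O(C(u)^2)\,(|r|+|\dr|+|\ddr|)$ uniformly: the exponent in the joint density is $-u^2/(1+r(\tau))$, and when $u^2 r(\tau)$ is not small a first-order expansion in $r$ at order $C(u)^2\sim e^{-u^2}$ fails. What the paper does instead (for $\tau$ large enough that $|r|<1/2$) is apply the mean value theorem to $e^{-\frac{u^2}{2}\frac{1-r}{1+r}}-e^{-u^2/2}$, obtaining a bound of the form $u^2 e^{-u^2/4}|r(\tau)|$; this is much larger than $C(u)^2$ but still $o(1)$ after integration against $|r|\in L^1$. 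The conditional-expectation factor is handled similarly via the regression correlation $\rho$ from \eqref{eq:rho}, which is controlled by $|\ddr|+\dr^2$. Your conclusion survives, but with a weaker bound than you claim.

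Second, on the diagonal you need more than ``order $C(u)$'': to get the limit exactly $2$ you must show $\int_0^\delta \ci(T,\tau,u)\,d\tau\to 1$ with the correct constant. The paper isolates the dominant term $(\psi u)^2=\big(\dr\, u/(1+r)\big)^2$ in $\E|Y_1+\psi u||Y_2-\psi u|$ (the cross terms and $\E|Y_1Y_2|$ vanish, the latter by Geman's condition), and then evaluates the remaining integral exactly via the substitution $v=\frac{u^2}{2}\,\frac{1-r(\tau)}{1+r(\tau)}$, which collapses it to $\int_0^{M(u)} e^{-v}\,dv\to 1$. This change of variables is the key computational step; Taylor expansion alone does not pin down the constant.
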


The second result states the 
asymptotic normality of the standardized number of crossings $N(T,u)$.  
Extra conditions on the process $\X$ are imposed 
and now $u$ depends on $T$, 
we write $u_T$ to emphasize this dependence. 
We need the following definition. 
\begin{definition}\label{remark:a1}
For $t>0$ and $-\infty\leq a\leq b\leq \infty$, let $\mathcal F_a^b$ be the $\sigma$-algebra generated by the random variables $\{X_s\colon a\leq s\leq b\}$. 
The $\alpha$-mixing coefficient is defined as
$$\alpha(t)=\sup\{|\mathbf{P}(U)\mathbf{P}(V)-\mathbf{P}(U\cap V)|:\, U\in\mathcal F_{-\infty}^0,\, V\in\mathcal F_{t}^{\infty}\},$$
and the $\rho$-mixing coefficient is 
$$\rho(t)=\sup\{|\cov(X,Y)|:\, X\in\mathbb L^2(\Omega,\mathcal F_{-\infty}^0,\mathbf{P}),\,Y\in\mathbb L^2(\Omega,\mathcal F_{t}^{\infty},\mathbf{P})\}.
$$
We say that the process is $\alpha$-mixing (resp. $\rho$-mixing) if $\alpha(t)\to_{t\to\infty}0$ (resp. $\rho(t)\to_{t\to\infty}0$).
\end{definition}
\begin{theorem}\label{teo}
Assume the following conditions on the process $\X$: 
For any $t_1<t_2<t_3$, the distribution of the vector $(X(t_1),X(t_2),X(t_3))$ is non-degenerated. 
The process $\X$ is $\alpha$-mixing with a polynomial rate $\mu>4$, i.e. 
\begin{equation}\label{eq:mixing}
\alpha(T)\sim T^{-\mu},\quad (T\to\infty).
\end{equation}
The covariance function verifies:
\begin{equation}\label{eq:expansion}
r(\tau){ =} 1-\frac12\tau^2+d\tau^4+e\tau^6+o(\tau^6),\quad (\tau\to0).
\end{equation}
Besides, assume that there exists $0<\gamma<1$ such that $\mu\gamma>4$ and 
$$
\expected\sim T^\gamma,\quad (T\to\infty).
$$
Then, the standardized number of crossings converges in distribution towards 
the standard normal distribution as  $T\to\infty$, that is:
\begin{equation}\label{eq:main}
{N(T,u_T)-\expected\over\sqrt{2\expected}}\Rightarrow \mathcal N(0,1),\quad(T\to\infty).
\end{equation}
\end{theorem}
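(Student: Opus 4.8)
We outline the proof of Theorem \ref{teo}. The plan is to combine the Bernstein big‑block/small‑block splitting of $[0,T]$ with a Lyapunov central limit theorem for a triangular array of independent summands, the dependence between blocks being killed by the $\alpha$‑mixing hypothesis \eqref{eq:mixing}; the second moment is supplied by Theorem \ref{key} and the third moment by the Rice formula for the third factorial moment. Write $C(u)=\E N(1,u)$ and $\lambda=\lambda(T,u_T)=TC(u_T)\sim T^{\gamma}$. One checks first that the hypotheses of Theorem \ref{teo} imply those of Theorem \ref{key}: Geman's condition (G) follows from \eqref{eq:expansion}, since there $\theta(\tau)=d\tau^{4}+e\tau^{6}+o(\tau^{6})$ and $\theta'(\tau)/\tau^{2}=4d\tau+6e\tau^{3}+o(\tau^{3})$ is integrable near $0$; and $r(\tau)\to0$, $\dr(\tau)\to0$ together with the integrability \eqref{eq:integrable} follow from the Kolmogorov--Rozanov inequality applied to $X$ and its derivatives, so that $|r(\tau)|,|\dr(\tau)|,|\ddr(\tau)|\le C\,\alpha(\tau)\sim C\,\tau^{-\mu}$, and $\mu>4$. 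In particular $\var N(T,u_T)\sim 2\lambda$, so $\sqrt{2\lambda}$ is the correct normalization and it suffices to prove $(N(T,u_T)-\E N(T,u_T))/\sqrt{2\lambda}\Rightarrow\mathcal N(0,1)$.

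Fix exponents $0<a<b<1$ with $\mu a+b>1$ and $b<1-\tfrac34\gamma$; the constraints $\mu>4$, $\mu\gamma>4$ and $\gamma<1$ guarantee that such a pair exists. Put $p=\lfloor T^{b}\rfloor$, $q=\lfloor T^{a}\rfloor$, $m=\lfloor T/(p+q)\rfloor\to\infty$, and split $[0,T]$ into $m$ big blocks $I_{1},\dots,I_{m}$ of length $p$ alternating with $m$ small blocks $J_{1},\dots,J_{m}$ of length $q$, plus a terminal remainder of length at most $p+q$. Set $N_{j}=N(I_{j},u_{T})$, $\mu_{0}=\E N_{j}=pC(u_{T})$ (the same for every $j$), $S'_{T}=\sum_{j}N_{j}$ and $S''_{T}=N(T,u_{T})-S'_{T}$. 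By stationarity and the uniform statement of Theorem \ref{key} (applied to intervals of length $q\ge t_{0}$, using $u_{T}\to\infty$), $\var N(J_{j},u_{T})\le 3\,qC(u_{T})$ for $T$ large; moreover the covariances $\cov(N(J_{j},u_{T}),N(J_{k},u_{T}))$ are summable in $|j-k|$ by covariance estimates analogous to those underlying Theorem \ref{key} together with \eqref{eq:mixing} (distinct small blocks are at least $p\to\infty$ apart), whence $\var S''_{T}\le C\,mqC(u_{T})\asymp(q/p)\lambda=o(\lambda)$. Therefore $(S''_{T}-\E S''_{T})/\sqrt{2\lambda}\to0$ in $L^{2}$, $\cov(S'_{T},S''_{T})=o(\lambda)$ by Cauchy--Schwarz, and $\var S'_{T}\sim 2\lambda$; it remains to prove the central limit theorem for $S'_{T}$.

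Using the classical bound $|\cov(e^{i\theta Y},e^{i\theta Z})|\le 4\,\alpha(\mathrm{dist})$ for $Y,Z$ measurable with respect to $\sigma$‑algebras at distance $\mathrm{dist}$, and telescoping over the big blocks (pairwise at least $q$ apart),
\[
\Big|\E\,e^{i\theta S'_{T}}-\prod_{j=1}^{m}\E\,e^{i\theta N_{j}}\Big|\le 4m\,\alpha(q)\asymp T^{\,1-b-\mu a}\to0,
\]
a bound independent of $\theta$; hence $(S'_{T}-\E S'_{T})/\sqrt{2\lambda}$ has the same weak limit as the normalized sum of the array $\xi_{j}=(N_{j}-\E N_{j})/\sqrt{2\lambda}$, $1\le j\le m$, made independent. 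The same covariance bound gives $\sum_{j\ne k}|\cov(N_{j},N_{k})|=o(\lambda)$, so $\sum_{j}\var N_{j}\sim\var S'_{T}\sim 2\lambda$ and the array has total variance tending to $1$. Writing $M^{(j)}_{k}=\E[N_{j}(N_{j}-1)\cdots(N_{j}-k+1)]$, elementary algebra together with $M^{(j)}_{2}\le C(\mu_{0}+\mu_{0}^{2})$ (from Theorem \ref{key}) gives $\E|N_{j}-\E N_{j}|^{3}\le C\big(M^{(j)}_{3}+\mu_{0}+\mu_{0}^{3}\big)$, and the key Rice estimate $M^{(j)}_{3}\le C(\mu_{0}+\mu_{0}^{3})$ (see below) then yields, since $m\mu_{0}\asymp\lambda$,
\[
\frac{1}{(2\lambda)^{3/2}}\sum_{j=1}^{m}\E|N_{j}-\E N_{j}|^{3}\le C\,\frac{m\mu_{0}+m\mu_{0}^{3}}{\lambda^{3/2}}\asymp\lambda^{-1/2}\big(1+\mu_{0}^{2}\big)\to0,
\]
because $\lambda\sim T^{\gamma}\to\infty$ and $\mu_{0}=pC(u_{T})=T^{\,b+\gamma-1}=o(\lambda^{1/4})$ by the choice $b<1-\tfrac34\gamma$. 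The Lyapunov central limit theorem for $\sum_{j}\xi_{j}$, combined with the decoupling step and the negligibility of $S''_{T}$, gives \eqref{eq:main}.

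The main obstacle is the bound $M^{(j)}_{3}\le C(\mu_{0}+\mu_{0}^{3})$, uniform in the growing block length $p=p_{T}$ and as $u_{T}\to\infty$, read off from the Rice formula
\[
M^{(j)}_{3}=\int_{[0,p]^{3}}p_{(X(t_{1}),X(t_{2}),X(t_{3}))}(u_{T},u_{T},u_{T})\,\E\Big[\,|\dx(t_{1})\dx(t_{2})\dx(t_{3})|\ \Big|\ X(t_{i})=u_{T},\ i=1,2,3\Big]\,dt_{1}\,dt_{2}\,dt_{3}.
\]
On the off‑diagonal part of $[0,p]^{3}$ (the three crossing points mutually far apart) the integrand factorizes and contributes $\mu_{0}^{3}(1+o(1))$, which is routine once \eqref{eq:integrable} and the non‑degeneracy assumption are available; the region where exactly two of the points coalesce contributes $O(\mu_{0}^{2})$; and the fully diagonal region contributes $O(\mu_{0})$, the near‑diagonal behaviour of the density and of the conditional expectation of $|\dx(t_{1})\dx(t_{2})\dx(t_{3})|$ being exactly what is controlled by the sixth‑order expansion \eqref{eq:expansion} — this is where the coefficients $d$ and $e$ are genuinely used, and, to our knowledge, the first use of the third‑order Rice formula at a nonzero, diverging level. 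Making these three contributions precise, and bookkeeping the $u_{T}$‑dependence of the various prefactors (which is at worst polynomial in $u_{T}\asymp\sqrt{\log T}$ and therefore absorbed by the powers of $T$ in the exponent inequalities above), is the technically delicate part of the argument.
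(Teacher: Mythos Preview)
Your overall architecture matches the paper's: Bernstein big/small blocks, negligibility of the small blocks in $L^{2}$, decoupling of the big blocks via the $\alpha$-mixing hypothesis, and a Lyapunov-type condition checked through the third factorial moment computed by Rice's formula. The paper uses the dependent Lindeberg CLT of Bardet--Doukhan--Lang--Ragache in place of your characteristic-function telescoping plus classical Lyapunov for an independent array, but these are equivalent devices.

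The substantive gap is in your third-moment estimate. You assert $M^{(j)}_{3}\le C(\mu_{0}+\mu_{0}^{3})$ with at worst polynomial-in-$u$ prefactors, but this is \emph{sharper} than what the paper proves, and you do not supply the argument. The paper's actual lemma establishes only
\[
\E\big[N(R,u)^{[3]}\big]\le R^{3}(A+Bu^{3})\,e^{-u^{2}/2},
\]
via the uniform density bound $p_{(X(s_{1}),X(s_{2}),X(s_{3}))}(u,u,u)\le C\Delta^{-1/2}e^{-u^{2}/2}$ coming from the elementary inequality $\bu\Sigma^{-1}\bu^{t}>u^{2}$. With this cruder bound the Lyapunov sum $m\,M^{(j)}_{3}/\lambda^{3/2}$ is of order $T^{\,2b-\gamma/2+\epsilon}$, which forces $b<\gamma/4$ rather than your $b<1-\tfrac34\gamma$; combined with the decoupling constraint, this is precisely where the hypothesis $\mu\gamma>4$ enters. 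Notice that your own choice of exponents $(a,b)$ does not use $\mu\gamma>4$ at all --- existence follows already from $\mu>4$ and $\gamma<1$ --- so if your moment bound were correct you would be proving a strictly stronger theorem than the one stated. That is a warning sign.

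Your heuristic decomposition of $M^{(j)}_{3}$ --- off-diagonal $\sim\mu_{0}^{3}$, two-point coalescence $O(\mu_{0}^{2})$, full diagonal $O(\mu_{0})$ --- is plausible, but the claim that the $u$-dependence is ``at worst polynomial'' is not justified. In the region where the three points are mutually separated by more than some fixed $\delta$ but at \emph{finite} distance, the joint density carries a factor $\exp\big(-\tfrac{u^{2}}{2}\,\mathbf{1}\Sigma^{-1}\mathbf{1}^{t}\big)$ with $\mathbf{1}\Sigma^{-1}\mathbf{1}^{t}$ strictly between $1$ and $3$; the discrepancy from $e^{-3u^{2}/2}$ involves factors of the type $e^{u^{2}r(h)}$, which are not polynomial in $u$. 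Controlling these would require $T$-dependent cutoffs and careful tracking of the resulting $T$-powers --- work that neither you nor the paper carries out. The paper instead accepts the crude $e^{-u^{2}/2}$ bound on all of $I_{3}$, pays the price $b<\gamma/4$, and invokes the hypothesis $\mu\gamma>4$ to close the argument.
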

\begin{remark}
A sufficient condition for the non-degeneracy of the finite-distributions of the stationary Gaussian process $\X$ 
is that the support of the spectral measure of $r$ has an accumulation point, see page 82 of \cite{aw}.
\end{remark}


Let us finish this section discussing a direct approach based on 
the celebrated Volkonski\u{\i}-Rozanov Theorem.
This approach yields some partial results and the motivation for Theorem \ref{teo}, 
see \cite{vr1,vr2}  or Th. 10.1 in \cite{aw}. 
Roughly speaking, Volkonski\u{\i}-Rozanov Theorem states that when 
the number of up-crossings 
$$
U(T,u)=\#\{t\in [0,T]:X(t)=u,X'(t)>0\},
$$
satisfy $\E\, U(T,u)\to\ell$,
once normalized, they converge towards a Poisson distribution with parameter $\ell$. 
As said above, the motivation for Theorem \ref{teo} is provided by the fact that 
the normalized Poisson distribution with parameter $\ell$ converges to the normal distribution as $\ell\to\infty$. 
Theorem \ref{teo:vr} below, using this approach, 
states the asymptotic normality of $N(T,u)$ but only for some sequences $(T_n,u_n)$ 
in a non-constructive way. 
Theorem \ref{teo} gives a more satisfactory result under more restrictive conditions on the process $\X$. 
In order to formulate Theorem \ref{teo:vr}, whose hypothesis are the same as those of Volkonski\u{\i}-Rozanov Theorem (Th. 10.1 in \cite{aw}),
we need to introduce the following condition:
\begin{itemize}
\item[(B)] Berman's condition: ${r}(\tau)\log(\tau)\to 0$ as $\tau\to\infty$. 
\end{itemize}
\begin{theorem}[Existence of a good sequence]
Assume that $\X$ satisfies conditions $-\ddr(0)<\infty$, \rm{(B)} and \rm{(G)}.
Then, there exists a sequence $(T_n,u_n)\to(\infty,\infty)$ such that $\lambda_n:=\lambda(T_n,u_n)\to\infty$ 
and
\begin{equation*}
\frac{N(T_n,u_n)-\lambda_n}{\sqrt{2\lambda_n}}\Rightarrow \mathcal{N}(0,1),\text{ as $n\to\infty$.}
\end{equation*}
\label{teo:vr}
\end{theorem}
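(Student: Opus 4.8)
The plan is to combine the Volkonski\u{\i}-Rozanov Theorem with a diagonal extraction argument, using the elementary fact that a standardized Poisson random variable with intensity $\ell$ converges weakly to $\mathcal{N}(0,1)$ as $\ell\to\infty$. Under conditions (B) and (G), together with $-\ddr(0)<\infty$, the process $\X$ satisfies the hypotheses of Theorem 10.1 in \cite{aw}; hence for any family $(T,u)$ with $T\to\infty$ and $u\to\infty$ keeping $\E\, U(T,u)$ fixed equal to some constant $\ell$, the normalized number of up-crossings $U(T,u)$ converges in distribution to a Poisson law with parameter $\ell$.

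First I would fix, for each integer $k\geq 1$, a pair $(S_k,v_k)$ with $S_k\to\infty$, $v_k\to\infty$, and $\E\, U(S_k,v_k)=k$; such pairs exist because $\E\, U(T,u)=\frac{T}{2\pi}e^{-u^2/2}$ is continuous in both variables, tends to $0$ for $u$ large with $T$ bounded, and to $\infty$ for $T$ large with $u$ bounded, so the level set $\{\E\,U=k\}$ is nonempty and unbounded. By Volkonski\u{\i}-Rozanov, $U(S_k,v_k)\Rightarrow \mathrm{Poisson}(k)$ as $S_k$ (equivalently $k$) runs along this curve; more precisely, for each fixed $k$ the convergence holds as we move along the level curve, but since $(S_k,v_k)$ is a single point we instead invoke the theorem along an auxiliary sequence inside the level set $\{\E\, U = k\}$ accumulating at $(S_k, v_k)$, or simply note that for each $k$ the distribution of $U(S_k,v_k)$ can be made arbitrarily close (in, say, L\'evy distance) to $\mathrm{Poisson}(k)$ by choosing $S_k$ large enough along the curve. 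Concretely: by Volkonski\u{\i}-Rozanov there is $S_k$ large with $d_{\mathrm{L\'evy}}\big(\mathcal L(U(S_k,v_k)),\mathrm{Poisson}(k)\big)<1/k$, where $v_k$ is the corresponding level so that $\E\, U(S_k,v_k)=k$. Then the standardized up-crossings $(U(S_k,v_k)-k)/\sqrt{k}$ converge in distribution to $\mathcal{N}(0,1)$: indeed their L\'evy distance to the standardized Poisson$(k)$ is $o(1)$ (a $1/k$-perturbation rescaled by $1/\sqrt{k}$), and the standardized Poisson$(k)$ converges to $\mathcal{N}(0,1)$.

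The remaining step is to pass from up-crossings $U$ to all crossings $N$. Since $u_T=v_k>0$ and the paths are smooth, every down-crossing between two consecutive up-crossings and vice versa gives $|N - 2U|\leq 1$ deterministically on each trajectory (the crossings alternate, so the counts differ by at most one). Therefore $N(S_k,v_k)=2U(S_k,v_k)+O(1)$, and with $\lambda_k:=\lambda(S_k,v_k)=\E\, N(S_k,v_k)=2k+O(1)$ we get
$$
\frac{N(S_k,v_k)-\lambda_k}{\sqrt{2\lambda_k}}=\frac{2U(S_k,v_k)-2k}{\sqrt{4k}}+o(1)=\frac{U(S_k,v_k)-k}{\sqrt{k}}+o(1)\Rightarrow\mathcal{N}(0,1),
$$
where $o(1)$ terms are deterministically bounded by $C/\sqrt{k}\to 0$. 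Setting $T_n=S_n$, $u_n=v_n$, $\lambda_n=\lambda(T_n,u_n)\to\infty$ finishes the proof.

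I expect the only delicate point to be the \emph{uniformity}/diagonalization: the Volkonski\u{\i}-Rozanov conclusion is stated for a single limiting regime, so one must carefully argue that along the level curve $\{\E\, U(T,u)=k\}$ the weak convergence to $\mathrm{Poisson}(k)$ genuinely holds as $T\to\infty$ on that curve — this is exactly the content of Th. 10.1 in \cite{aw}, where the level is allowed to vary with $T$ — and then perform the standard diagonal extraction choosing $k=k_n\to\infty$ slowly enough that the $1/k_n$ approximation errors still vanish. The non-constructive nature of the statement is precisely what allows this extraction to go through without quantitative control on how fast $S_k$ must grow.
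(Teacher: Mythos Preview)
Your proposal is correct and follows essentially the same route as the paper: invoke Volkonski\u{\i}--Rozanov along the level curve $\{\E\,U=k\}$ to approximate $U(S_k,v_k)$ by $\mathrm{Poisson}(k)$ within distance $1/k$, combine this with the Poisson CLT via a triangle inequality in a metric for weak convergence (you use L\'evy distance, the paper uses Prohorov distance), and then transfer the result from $U$ to $N$ using $|N-2U|\le 1$. Two cosmetic remarks: first, $\lambda_k=\E\,N(S_k,v_k)=2\,\E\,U(S_k,v_k)=2k$ exactly, not merely $2k+O(1)$; second, your parenthetical ``rescaled by $1/\sqrt{k}$'' is slightly misleading, since under the affine map $x\mapsto(x-k)/\sqrt{k}$ the L\'evy distance does not shrink but is simply bounded by the original $1/k$, which is all you need.
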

\begin{remark}\label{remark:up}
As the process has continuous trajectories, we have the relation 
$|N(T,u)-2U(T,u)|\leq 1$,
and all asymptotic results for large number of crossings can be expressed in terms of crossings or up-crossings.
Observe that this is not the case in Volkonskii-Rozanov scheme.
\end{remark}
\section{Preliminaries}\label{preliminaries}
\subsection{The covariance function and its derivatives}
We present some basic results that are used thorough the paper.
\begin{lemma} \label{lemma:covariance}
Assume that the process $\X$ has a twice differentiable covariance.
\par\noindent {\rm (a)} Under the conditions $r(0)=-\ddr(0)=1$, we have $|r(t)|\leq 1$, $|\dr(t)|\leq 1$, $|\ddr(t)|\leq 1$.
\par\noindent {\rm (b)} Under \eqref{eq:integrable} the functions $r$, $\dot{r}$ and $\ddot{r}$ 
are in $L^p(\R)$ for all $p\geq 1$.
\par\noindent {\rm (c)}
The $\alpha$-mixing condition 
\eqref{eq:mixing} implies that  the functions $r$, $\dot{r}$ and $\ddot{r}$ 
are in $L^p(\R)$ for all $p\geq 1$. In particular, condition \eqref{eq:integrable} holds.
\end{lemma}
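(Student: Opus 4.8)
The plan is to treat the three items in order, reducing (a) to elementary second-order arguments, (b) to (a), and (c) to (b) via a covariance estimate extracted from the mixing rate. For (a), I would first note that, since the covariance is $C^2$, the process $\X$ is mean-square differentiable, $\dx$ is again a centered stationary Gaussian process, and the identities
\[
r(t)=\cov(X(0),X(t)),\qquad \dr(t)=\cov(X(0),\dx(t)),\qquad \ddr(t)=-\cov(\dx(0),\dx(t))
\]
hold; they follow by differentiating $\E\big(X(s)X(s+\tau)\big)$ in $\tau$ and using stationarity. Under the standing normalization $r(0)=\var X(0)=1$ and $-\ddr(0)=\var\dx(0)=\var\dx(t)=1$, the three bounds $|r(t)|\le1$, $|\dr(t)|\le1$, $|\ddr(t)|\le1$ are then immediate from the Cauchy--Schwarz inequality applied to the three covariances above. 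Note that Gaussianity plays no role in this part.

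For (b), observe that $r$ and $\ddr$ are even while $\dr$ is odd, so condition \eqref{eq:integrable} already states that $r,\dr,\ddr\in L^1(\R)$. Combining this with the uniform bound $|f|\le1$ from (a): for every $p\ge1$ and $f\in\{r,\dr,\ddr\}$ one has $|f(\tau)|^p\le|f(\tau)|$ pointwise, hence $\int_\R|f|^p\,d\tau\le\int_\R|f|\,d\tau<\infty$, so $f\in L^p(\R)$ for all $p\ge1$.

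For (c), the idea is to dominate $r,\dr,\ddr$ by the $\rho$-mixing coefficient and then $\rho$ by $\alpha$. First I would check that $\dx(0)$ is measurable with respect to the completion of $\F_{-\infty}^0$, by writing it as the $L^2$-limit of the increments $n\big(X(0)-X(-1/n)\big)$, each of which is $\F_{-\infty}^0$-measurable; likewise $X(t)$ and $\dx(t)$ lie in $L^2(\F_t^\infty)$ for $t>0$. Since passing to completions leaves the $\rho$-mixing coefficient unchanged, the identities of (a) together with the definition of $\rho$ give
\[
\max\big(|r(t)|,|\dr(t)|,|\ddr(t)|\big)\le\rho(t),\qquad t>0.
\]
By the classical Kolmogorov--Rozanov inequality for jointly Gaussian $\sigma$-algebras, $\rho(t)\le 2\pi\,\alpha(t)$, so \eqref{eq:mixing} yields $\rho(t)=O(t^{-\mu})$ with $\mu>4$; thus $\int_1^\infty\rho(t)\,dt<\infty$, and since $r,\dr,\ddr$ are bounded (by (a)) and continuous, the integral over $[0,1]$ is finite as well. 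Hence \eqref{eq:integrable} holds, and (b) then delivers the $L^p$ conclusion.

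The routine parts are (a) and (b). The step that needs genuine care is the measurability claim in (c) --- that a one-sided filtration of $\X$ carries the mean-square derivative $\dx$ at its endpoint --- together with the Kolmogorov--Rozanov comparison of the $\alpha$- and $\rho$-mixing coefficients, which is exactly where Gaussianity enters; everything else is bookkeeping.
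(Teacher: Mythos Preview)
Your proposal is correct and follows essentially the same route as the paper: Cauchy--Schwarz on the covariance identities for (a), the pointwise bound $|f|^p\le|f|$ for (b), and the chain $|r|,|\dr|,|\ddr|\le\rho\le \mathbf C\,\alpha$ via Kolmogorov--Rozanov for (c). Your extra care in justifying that $\dx(0)\in L^2(\F_{-\infty}^0)$ through one-sided increment approximations is a detail the paper simply asserts.
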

\begin{proof}
{\it Proof of {\rm (a)}.}
First observe that, according to Proposition 1.13 in \cite{aw}, the process $\{\dot X(t)\}$ exists, as the derivative in quadratic mean of $\X$.
We have
$$
r(t)=\E(X(0)X(t)),\quad \dr(t)=\E(X(0)\dot X(t)),\quad  \ddr(t)=-\E(\dot X(0)\dot X(t)).
$$
As $\E(X(t)^2)=\E(\dot X(t)^2)=1$, the statements of (a) follow from the application of the Cauchy-Schwarz inequality.
\par\noindent
{\it Proof of {\rm (b)}.} In view of (a) it is direct.
\par\noindent
{\it Proof of {\rm (c)}.}
By using that $X(0)$ and $\dot X(0)$ belong to $\mathcal F_{-\infty}^0$, and that $X(t)$ and $\dot X(t)$ belong to $\mathcal F_t^{\infty}$, 
we get
$$
|r(t)|\le\rho(t),\quad |\dr(t)|\le \rho(t),\quad |\ddr(t)|\le\rho(t).
$$ 
As by Kolmogorov-Rozanov inequality (see pag. 57 in \cite{mixing}), 
$\rho(t)\le \mathbf C\alpha(t)$ and that by our hypothesis $\int_0^\infty\alpha(t)dt<\infty,$ we get that the three functions $r,\dr,\ddr$ are integrable.  
As by (a) they are bounded, the statement of (c) follows.
\end{proof}
\begin{remark}\label{remark:a2}
The parameters in \eqref{eq:expansion} are not completely free, in particular 
\begin{equation*}
r^{(iv)}(0)-\ddr(0)^2=24d-1\geq 0,
\end{equation*}
In fact, 
$\ddr(t)=\E(X(0)\ddot X(t))$, 
so taking $t=0$, and applying Cauchy-Schwarz inequality, we obtain 
$|\ddr(0)|=|\E(X(0)\ddot X(0))|\le \sqrt{r(0)r^{(iv)}(0)}$.
Since $r(0)=\ddr(0)=1$  
we get the desired inequality.
\end{remark}

\subsection{Rice formula}
In the sequel we need to deal with the second and the third moments of the number of crossings $N=N(I,u)$. 
The main tool is the celebrated Rice formula which we now present in its general form, see \cite{aw} for the details. 

Let $n\geq 1$, $N^{[n]}(T,u)=\prod^{n-1}_{k=0}(N(T,u)-k)$, then 
we have
\begin{equation}\label{eq:rice}
 \E\big(N^{[n]}(T,u)\big)
 =\int_{I^{n}}\E\left[\left|\prod^n_{k=1}\dot{X}(s_k)\right|\mid X=\bu\right]p_X(\bu)ds,
\end{equation}
being $s=[s_1,\dots,s_n]$, $X=[X(s_1),\dots,X(s_n)]^t$ 
and $\bu=[u,\dots,u]^t$. 
To deal with the conditional expectation in this formula we use 
the following result, that has a direct proof.
\begin{lemma}[Gaussian regression]\label{lemma:1}
Consider the times $s_1\leq\cdots\leq s_n\in I$. 
Denote $\dx=[\dx(s_1),\dots,\dx(s_n)]^t$, 
$$
\Sigma=\var X=\E(XX^t),\quad
\Sigma_{10}=\cov(\dx,X)=\E(\dx X^t),\quad
\Sigma_{11}=\E(\dx\dx^t)=\var\dx,
$$
and
$$
\matrixalpha
=
\left[
\begin{matrix}
\Psi_1& \dots &\Psi_n
\end{matrix}
\right]^t
:=\Sigma_{10}\Sigma^{-1}.
$$
Then:
{\rm (a)} The Gaussian vector $Y=[Y_1,\dots,Y_n]^t$ defined by
\begin{equation}\label{eq:pitagoras}
Y=\dx-\matrixalpha X
\end{equation}
is independent from $X$, centered, and has a variance matrix given by
\begin{equation*}
\Sigma_Y=\var Y=\Sigma_{11}+\matrixalpha\Sigma_{10}=\Sigma_{11}-\matrixalpha\Sigma^{t}_{10},
\end{equation*}
since $\Sigma^{t}_{10}=-\Sigma_{10}$. 
Since $\Psi \Sigma^{t}_{10}=\Sigma_{10}\Sigma^{-1}\Sigma^{t}_{10}$ 
and $\Sigma^{-1}$ is positive definite, it follows from \eqref{eq:pitagoras} that 
$\var Y_i\leq \var \dx_i=1$ for $i=1,2,\dots,n$.

\noindent {\rm(b)} 
The random vector
$$
(Y_1+\Psi_1\bu,\dots,Y_n+\Psi_n\bu),
$$
has the same distribution as $\dx$ conditional on the set $\{X=\bu\}$.
\end{lemma}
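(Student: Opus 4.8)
The plan is to read this as the classical Gaussian regression identity, built on the single principle that within a jointly Gaussian family being uncorrelated is the same as being independent. All the computations are linear-algebraic manipulations of the covariance blocks $\Sigma$, $\Sigma_{10}$, $\Sigma_{11}$ together with $\matrixalpha=\Sigma_{10}\Sigma^{-1}$; the only non-algebraic input is the invertibility of $\Sigma=\var X$, which I would secure by assuming, without loss of generality, that the times are strictly ordered $s_1<\dots<s_n$ (the diagonal coincidence set is Lebesgue-null and irrelevant once this lemma is fed into the Rice formula \eqref{eq:rice}) together with the non-degeneracy of the finite-dimensional marginals of $\X$.

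For part (a), first note that $(Y,X)=(\dx-\matrixalpha X,\,X)$ is a linear image of the jointly Gaussian vector $(\dx,X)$, hence itself jointly Gaussian, and it is centered because $\dx$ and $X$ are. A one-line computation then gives $\cov(Y,X)=\E(\dx X^t)-\matrixalpha\,\E(XX^t)=\Sigma_{10}-\Sigma_{10}\Sigma^{-1}\Sigma=0$, and for a Gaussian vector uncorrelated blocks are independent, which is the first assertion. For the variance, expanding $\var Y=\E\big((\dx-\matrixalpha X)(\dx-\matrixalpha X)^t\big)$ produces $\Sigma_{11}-\Sigma_{10}\matrixalpha^t-\matrixalpha\Sigma_{10}^t+\matrixalpha\Sigma\matrixalpha^t$; substituting $\matrixalpha=\Sigma_{10}\Sigma^{-1}$ and using the symmetry of $\Sigma$ collapses the last three terms into a single $\matrixalpha\Sigma_{10}^t$, so $\Sigma_Y=\Sigma_{11}-\matrixalpha\Sigma_{10}^t$. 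The identity $\Sigma_{10}^t=-\Sigma_{10}$ is the antisymmetry of the process/derivative cross-covariance: differentiating $\E(X(s_i)X(s_j))=r(s_j-s_i)$ in $s_i$ gives $(\Sigma_{10})_{ij}=-\dr(s_j-s_i)$, which is antisymmetric since $r$ is even, and this rewrites $\Sigma_{11}-\matrixalpha\Sigma_{10}^t$ as the stated $\Sigma_{11}+\matrixalpha\Sigma_{10}$. Finally $\matrixalpha\Sigma_{10}^t=\Sigma_{10}\Sigma^{-1}\Sigma_{10}^t$ is positive semidefinite because $\Sigma^{-1}$ is positive definite, so each diagonal entry of $\Sigma_Y$ satisfies $\var Y_i\le(\Sigma_{11})_{ii}=\var\dx(s_i)=-\ddr(0)=1$.

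For part (b), invert the defining relation \eqref{eq:pitagoras} to write $\dx=Y+\matrixalpha X$. Conditioning on $\{X=\bu\}$ and using the independence of $Y$ and $X$ established in (a), the conditional law of $Y$ coincides with its unconditional law, hence the conditional law of $\dx$ given $\{X=\bu\}$ is the law of $Y+\matrixalpha\bu$; reading off the $i$-th coordinate of $\matrixalpha\bu$ as $\Psi_i\bu$ yields precisely the vector $(Y_1+\Psi_1\bu,\dots,Y_n+\Psi_n\bu)$.

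I do not expect a genuine obstacle here — as the authors remark, the statement is \emph{direct}. The two points that deserve a little care are the well-definedness of $\matrixalpha$, handled through the strict-ordering/non-degeneracy convention mentioned above, and keeping the transpose conventions straight in the computation of $\var Y$, since the cancellation producing the clean formula uses both the symmetry of $\Sigma$ and the antisymmetry of $\Sigma_{10}$.
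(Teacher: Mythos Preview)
Your proof is correct and is precisely the standard argument the authors have in mind: the paper does not actually supply a proof, stating only that the lemma ``has a direct proof,'' and your computation --- uncorrelation via $\cov(Y,X)=\Sigma_{10}-\matrixalpha\Sigma=0$, the collapse of the variance expansion using $\matrixalpha=\Sigma_{10}\Sigma^{-1}$ and the symmetry of $\Sigma$, the antisymmetry of $\Sigma_{10}$ from the oddness of $\dot r$, and the positive-semidefiniteness of $\Sigma_{10}\Sigma^{-1}\Sigma_{10}^t$ --- is exactly that direct route.
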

\section{Proof of Theorem \ref{key}}\label{theorem1}
In this section we prove that the asymptotic expectation and variance of $N(T,u)$ are of the same order. 
We begin by specializing Lemma \ref{lemma:1} to the case $n=2$, see pg. 76 in \cite{aw}. 
For $\tau=s_2-s_1$ we have
\begin{equation*}
\Sigma=
\left[\begin{array}{cc}1&r(\tau)\\r(\tau)&1\end{array}\right].
\end{equation*}
Thus --ommiting the $\tau$ in the notation--, we have
$$
\Sigma_{10}=
\dr
\left[
\begin{array}{cc}
0&-1\\ 
1&0
\end{array}
\right],\qquad
\Sigma_{11}=
\left[
\begin{array}{cc}
1&-\ddot{r}\\-\ddot{r}&1
\end{array}
\right].
$$
Hence,
$$
\matrixalpha=
\left[\begin{matrix}
\Psi_1\\ \Psi_2
\end{matrix}\right]
=\frac\dr{1-r^2}
\left[\begin{matrix}
r & -1\\ 
1 & -r\\
\end{matrix}\right],\quad
\var\matrixalpha X=\frac{\dr^2}{1-r^2}\Sigma.
$$
$$
\var Y=
\left[
\begin{array}{cc}
1&-\ddot{r}\\-\ddot{r}&1
\end{array}
\right]-\frac{\dr^2}{1-r^2}
\left[\begin{array}{cc}1&r\\r&1\end{array}\right].
$$
Therefore, we find the following expressions 
\begin{align}\label{eq:rho}
\var Y_1&=\var Y_2=1-\frac{\dr^2}{1-r^2},\quad
\cov(Y_1,Y_2)=-\ddr-\frac{r\dr^2}{1-r^2},\notag\\ 
\rho:&={\cov(Y_1,Y_2)\over \var Y_1}=-\frac{(1-r^2{})\ddot{r}{}+r{}\dot{r}^2{}}{1-r^2{}-\dot{r}^2{}}.
\end{align}
We also have
$$
\Psi\bu=\frac{\dr}{1+r}
\left[
\begin{matrix}
-u\\ 
u\\
\end{matrix}
\right]
\sim 
\frac{-\tau}2\left[
\begin{matrix}
-u\\ 
u\\
\end{matrix}
\right],\ (\tau\to 0).
$$
Then we denote 
\begin{equation}\label{eq:psiu}
\psi u=\Psi_1\bu=-\Psi_2 \bu=\frac{\dr}{1+r}u.
\end{equation}

Now we turn to the proof of Theorem \ref{key}. 
\begin{proof}[Proof of Theorem \ref{key}]
With $N=N(T,u)$, we have
\begin{equation}\label{eq:var}
\frac{\var N}{\E N}=\frac{\E(N(N-1))-\E^2 N}{\E N}+1. 
\end{equation}
The condition $r(\tau)\to0$ ($\tau\to\infty$) implies that $|r(\tau)|\neq1$ for $\tau\neq0$. 
Hence, we can apply Rice formula. 
By Rice formula and the stationarity of $\X$ we have
\begin{equation*}
\E(N(N-1))=2\int^T_0(T-\tau)\E(|\dot{X}(0)\dot{X}(\tau)|\mid X(0)=X(\tau)=u)p_{X(0),X(\tau)}(u,u)d\tau. 
\end{equation*}
Here,
\begin{equation*}
p_{X(0),X(\tau)}(u,u)=\frac{1}{2\pi}\frac{1}{\sqrt{1-r^2(\tau)}}e^{-\frac{u^2}{1+r(\tau)}}.
\end{equation*}
Also by Rice formula $\E N=TC(u)=T\E(|\dot{X}(0)|)p_{X(0)}(u)$, 
since $T^2=2\int^T_0(T-\tau)d\tau$, we have 
\begin{equation*}
\E^2 N
=2\int^T_0(T-\tau)\E^2(|\dot{X}(0)|)p^2_{X(0)}(u)d\tau.
\end{equation*}
Hence, using the Gaussian regression in Lemma \ref{lemma:1}, 
the first term in the r.h.s. of \eqref{eq:var} can be written as $\int^T_0 \ci(T,\tau,u)d\tau$, 
being
$$
\ci(T,\tau,u)={T-\tau\over T}
\left(\E(|Y_1+\psi u||Y_2-\psi u|)\frac{e^{-\frac{u^2}{2}\frac{1-r(\tau)}{1+r(\tau)}}}{\sqrt{1-r^2(\tau)}}
-\E^2(|\dot{X}(0)|)e^{-{u^2\over 2}}\right).
$$
We have to prove that
$$
\lim_{u\to\infty}\int_0^T\ci(T,\tau,u)d\tau ={1},
$$
uniformly on $T\geq t_0$ for any $t_0>0$.
We divide the proof in three steps, corresponding to small, medium and large values of $\tau$ in the integral.

\par\vspace{2mm}\noindent
\textit{First step.}  For an arbitrary small $\delta>0$ to be chosen, smaller than $t_0$, 
uniformly in $T$, we have
$$
\lim_{u\to\infty}\int_0^\delta\ci(T,\tau,u)d\tau ={1}.
$$
It is no hard to verify the following limits for any $\delta>0$:
\begin{align*}
\lim_{u\to\infty}&\int_0^\delta{T-\tau\over {} T}
\E^2(|\dot{X}(0)|)e^{-{u^2\over 2}}d\tau=0,\\
\lim_{u\to\infty}&\int_0^\delta{T-\tau\over {} T}
\E|Y_1Y_2|
\frac{e^{-\frac{u^2}{2}\frac{1-r(\tau)}{1+r(\tau)}}}{\sqrt{1-r^2(\tau)}}
d\tau=0,\\
\lim_{u\to\infty}&\int_0^\delta{T-\tau\over {} T}
\E|(Y_1-Y_2)\psi u|
\frac{e^{-\frac{u^2}{2}\frac{1-r(\tau)}{1+r(\tau)}}}{\sqrt{1-r^2(\tau)}}
d\tau=0.
\end{align*}
Note that Geman's condition guarantees the finiteness of the second integral 
since $\frac{\theta'(\tau)}{\tau^2}\sim\frac{\var{Y_i}}{\sqrt{1-r^2(\tau)}}$ as $\tau\to0$, see page 99 of \cite{aw}. 
From these results, it follows that
$$
\lim_{u\to\infty}\int_0^\delta\ci(T,\tau,u)d\tau=
\lim_{u\to\infty}\int_0^\delta{T-\tau\over {} T}
(\psi u)^2
\frac{e^{-\frac{u^2}{2}\frac{1-r(\tau)}{1+r(\tau)}}}{\sqrt{1-r^2(\tau)}}
d\tau.
$$
Now, using \eqref{eq:psiu}, we have to prove that
$$
\lim_{u\to\infty}\int_0^\delta{T-\tau\over {} T}
\left({\dr(\tau)u\over1+r(\tau)}\right)^2
\frac{e^{-\frac{u^2}{2}\frac{1-r(\tau)}{1+r(\tau)}}}{\sqrt{1-r^2(\tau)}}
d\tau={1}.
$$
We now observe that, as 
$$
\lim_{\tau\to 0}{T-\tau\over  T}
{-\dr(\tau)\over\sqrt{1-r^2(\tau)}}=1,
$$
and $\delta$ is arbitrarily small, we have to prove that
$$
\lim_{u\to\infty}\int_0^\delta
{-\dr(\tau)u^2\over(1+r(\tau))^2}
e^{-\frac{u^2}{2}\frac{1-r(\tau)}{1+r(\tau)}}
d\tau={1}.
$$
We change variables according to 
$$
v={u^2\over 2}\left(\frac{1-r(\tau)}{1+r(\tau)}\right),
$$
that is monotonous for $\delta$ conveniently small. 
So
\begin{equation*}
\int_0^\delta
{-\dr(\tau)u^2\over(1+r(\tau))^2}
e^{-\frac{u^2}{2}\frac{1-r(\tau)}{1+r(\tau)}}
d\tau
=\int_0^{{u^2\over 2}\left(\frac{1-r(\delta)}{1+r(\delta)}\right)}
e^{-v}
dv
=1-\exp\left(-
{u^2\over 2}\left(\frac{1-r(\delta)}{1+r(\delta)}\right)
\right)\to 1\ (u\to\infty),
\end{equation*}
concluding the first step.

\par\vspace{2mm}\noindent
\textit{Second step.}  For all $T_0>0$ 
\begin{equation}\label{eq:second}
\lim_{u\to\infty}\int_\delta^{T_0}\ci(T,\tau,u)d\tau =0.
\end{equation}
We now work for $\tau\geq\delta>0$. As both $r(\tau)$ and $\dr(\tau)$ converge to zero
as $\tau\to\infty$, are continuous functions, and $|r(\tau)|\neq 1$ for $\tau>0$, 
we obtain the existence a constant,
say $r_0<1$, $|r(\tau)|\leq r_0$. As $|\dr(\tau)|\leq 1$, we have 
\begin{equation*}
\E(|\dot{X}(0)\dot{X}(\tau)|\mid X(0)=X(\tau)=u)=
\E(|Y_1+\psi u||Y_2-\psi u|)
\leq 1+{2\over 1-r_0} u+\left({1\over 1-r_0}\right)^2u^2\leq A+B u^2.
\end{equation*}
It follows that, for all fixed $\tau>\delta$ 
and uniformly in $T$, we have
\begin{equation*}
\lim_{u\to\infty}\ci(T,\tau,u)=0.
\end{equation*}
Furthermore,
$$
\frac{e^{-\frac{u^2}{2}\frac{1-r(\tau)}{1+r(\tau)}}}{\sqrt{1-r^2(\tau)}}\leq 
\frac{e^{-\frac{u^2}{2}\frac{1-r_0}{1+r_0}}}{\sqrt{1-r^2_0}},
$$
so the integrand in \eqref{eq:second} is uniformly bounded in $u$, concluding the second step by dominated convergence. 

\par\vspace{2mm}\noindent
\textit{Third step.}
There exists $T_0>0$ such that
$$
\lim_{u\to\infty}\int_{T_0}^\infty\ci(T,\tau,u)d\tau =0.
$$
Coming back to the difference, 
\begin{multline*}
\int_{T_0}^\infty\ci(T,\tau,u)d\tau
=\int_{T_0}^\infty
\frac{T-\tau}{T}
\frac{e^{-\frac{u^2}{2}\frac{1-r(\tau)}{1+r(\tau)}}}{\sqrt{1-r^2(\tau)}}
\Big(\E(|Y_1+\psi u||Y_2-\psi u|)-\E^2(|\dot{X}(0)|)\Big)d\tau\\
+\frac2\pi\int_{T_0}^\infty\frac{T-\tau}{T}
\Big(
\frac{e^{-\frac{u^2}{2}\frac{1-r(\tau)}{1+r(\tau)}}}{\sqrt{1-r^2(\tau)}}
-e^{-\frac{u^2}2}\Big)d\tau
=:I_1+I_2,
\end{multline*}
where $I_1,I_2$ denote respectively the first and second addends.
Now, we apply the triangle inequality. 
For the second integral we get
\begin{equation*}
I_2\leq \frac2\pi\int_{T_0}^\infty
 \Bigg|\frac{e^{-\frac{u^2}{2}\frac{1-r(\tau)}{1+r(\tau)}}}{\sqrt{1-r^2(\tau)}}-e^{-\frac{u^2}{2}}\Bigg|d\tau.
\end{equation*}
We have
\begin{equation}\label{eq:first}
\frac{e^{-\frac{u^2}{2}\frac{1-r(\tau)}{1+r(\tau)}}}{\sqrt{1-r^2(\tau)}}-e^{-\frac{u^2}{2}}
=\frac{e^{-\frac{u^2}{2}\frac{1-r(\tau)}{1+r(\tau)}}-e^{-\frac{u^2}{2}}}{\sqrt{1-r^2(\tau)}}
+\left[\frac{1}{\sqrt{1-r^2(\tau)}}-1\right]e^{-\frac{u^2}{2}}.
\end{equation}
The second term 
is equivalent, as $\tau\to\infty$, to $\frac12 r^2(\tau)e^{-u^2/2}$, because $r(\tau)\to 0$ as $\tau\to\infty$. 
In conclusion its integral vanishes because $\int_0^{\infty}r^2(\tau)d\tau<\infty$ (see (b) in Lemma \ref{lemma:covariance}).
For the first term in the r.h.s in \eqref{eq:first}, we
denote $\beta=\frac{1-r(\tau)}{1+r(\tau)}$.
As $\beta\to 1$, we take $T_0$ large enough such that $|\beta - 1|\leq 1/2$, and, simultaneously,
as $r(\tau)\to 0$, such that $\sqrt{1-r^2(\tau)}\geq 1/2$, and also $|r(\tau)|\leq 1/2$, these three inequalities for all $\tau\geq T_0$.
We apply Lagrange's Formula to the difference inside the integrand, 
$$
e^{-\frac{u^2}{2}\beta}-e^{-\frac{u^2}{2}}=e^{-\frac{u^2}{2}\theta}\left(-\frac{u^2}{2}\right)
\left(\frac{1-r(\tau)}{1+r(\tau)}-1\right)=e^{-\frac{u^2}{2}\theta}\frac{u^2r(\tau)}{1+r(\tau)}.
$$
The value $\theta$ is such that $|\theta-1|\leq|\beta-1|\leq 1/2$, so $\theta\geq 1/2$. Then
$$
e^{-\frac{u^2}{2}\theta}\leq e^{-\frac{u^2}{4}}.
$$
In conclusion, we have the following bound for the integrand:
$$
\frac1{\sqrt{1-r^2(\tau)}}\Big|e^{-\frac{u^2}{2}\frac{1-r(\tau)}{1+r(\tau)}}-e^{-\frac{u^2}{2}}\Big|
\leq 2e^{-\frac{u^2}{4}}u^2|r(\tau)|,
$$
and, as $r(\tau)$ is integrable, we conclude that 
$$
\int^\infty_{T_0} \Bigg|\frac{e^{-\frac{u^2}{2}\frac{1-r(\tau)}{1+r(\tau)}}-e^{-\frac{u^2}{2}}}{\sqrt{1-r^2(\tau)}}\Bigg|d\tau
\leq 
2e^{-\frac{u^2}{4}}u^2\int^\infty_{T_0}|r(\tau)|d\tau\to 0\quad({ u}\to\infty).
$$
Let us look at the first integral $I_1$. With similar arguments we have the following bound:
$$ 
I_1\leq 2e^{-\frac{u^2}{4}}\int_{T_0}^{\infty}
\Big|\E(|Y_1+\psi u||Y_2-\psi u|)-\E^2(|\dot{X}(0)|)\Big|d\tau
$$
Denote $\sigma^2=\var Y_1=\var Y_2$, and $Z_i=Y_i/\sigma$ $i=1,2$.
We have
\begin{align*}
\big|
\E(|Y_1+\psi u| 	&|Y_2-\psi u|)-\E^2(|\dot{X}(0)|)\big|
\leq
	\left|\E(|Y_1+\psi u||Y_2-\psi u|)-\E(|Y_1+\psi u||Y_2|)\right|
	\\&\quad
+	\left|\E(|Y_1+\psi u||Y_2|)-\E(|Y_1||Y_2|)\right|
+	\left|\E(|Y_1||Y_2|)-\sigma^2\E(|Z_1|)\E(|Z_2|)\right|
+	\left|1-\sigma^2\right|\E^2(|\dot{X}(0)|)\\
&\leq\E(|Y_1+\psi u||\psi u|)
+\E(|Y_2||\psi u|)
+\sigma^2\left|\E(|Z_1Z_2|)-\E(|Z_1|)\E(|Z_2|)\right|
+	\left|1-\sigma^2\right|\E^2(|\dot{X}(0)|).
\end{align*}
Let us analyze term by term. As for $i=1,2$, we have
$$
|\psi u|={|\dr|\over 1+r}u\leq 2|\dr|u. 
$$
Then, for the first two terms
$$
\aligned
\E(|Y_1+\psi u||\psi u|)&\leq \E(|Y_1|)|\psi u|+|\psi u\psi u|
\leq 6|\dr|u^2,\\
\E(|Y_2||\psi u|)&\leq 2|\dr|u.
\endaligned
$$
For the third term, we have $\sigma^2\leq 1$ and 
denoting $\rho=\cor(Y_1,Y_2)$ and $Z$ a standard gaussian random variable, 
independent from $Z_1$
\begin{multline*}
\left|\E(|Z_1Z_2|)-\E(|Z_1|)\E(|Z|)\right|=
\left|\E\left(|Z_1||\rho Z_1+\sqrt{1-\rho^2}Z|\right)-\E(|Z_1|)\E(|Z|)\right|\\
=\left|\E\left(|Z_1|\left|\rho Z_1+\sqrt{1-\rho^2}Z\right|-|Z_1||Z|\right)\right|
\leq
\left|\E\left(|Z_1|\left|\rho Z_1+\left(\sqrt{1-\rho^2}-1\right)Z\right|\right)\right|\\
\leq
|\rho|\E(Z_1^2)+
\left(\sqrt{1-\rho^2}-1\right)\E^2(|Z|).
\end{multline*}
Besides, using equation \eqref{eq:rho} 
\begin{align*}
|\rho|&\leq 2 \left(|\ddr| +\dr^2\right),\\
\left|\sqrt{1-\rho^2}-1\right|&=
\frac{\rho^2}{\sqrt{1-\rho^2}+1}
\leq |\rho|.
\end{align*}
Gathering all the terms, and observing that the integrals of $\dr$ and $\ddr$ are absolutely convergent, 
enlarging $T_0$ if necessary,  
we obtain  the bound
$$
I_1\leq K u^2e^{-\frac{u^2}{4}}\to 0,\quad (u\to\infty),
$$
where $K$ is a convenient constant. This concludes the proof of Theorem \ref{key}.
\end{proof}

\section{Proof of Theorem \ref{teo}}\label{theorem2}
We begin by the construction of the corresponding blocks in Bernstein's scheme.
Consider $b,c$ such that $0<c<b<\gamma/4$ and 
$\mu b>1$. Define
$$
n=n_T=T^{1-b},\quad
p_T=T^b,\quad
q_T=T^c.
$$
Then,  as $T\to\infty$,  
$n_T,p_T,q_T\to\infty$,
$$
n_T(p_T+q_T)=T+{o}(T),
$$
and
\begin{equation}\label{eq:rates}
{n_Tp_T\over T}\to 1,\qquad {n_Tq_T\over T}\to 0.
\end{equation}
For $k=1,\dots,n_T$ denote
$$
I_k=[(k-1)(p_T+q_T),kp_T+(k-1)q_T],\quad
J_k=[kp_T+(k-1)q_T,k(p_T+q_T)].
$$
$$
X^{n_T}_k={N(I_k,u)-p_TC(u)\over\sqrt{2\expected}},\quad 
Y^{n_T}_k={N(J_k,u)-q_TC(u)\over\sqrt{2\expected}}
$$
Our strategy to establish \eqref{eq:main}, is to prove
\begin{align}
\sum_{k=1}^{n_T}X^{n_T}_k&\Rightarrow \mathcal{N}(0,1)\label{eq:x},\\
\sum_{k=1}^{n_T}Y^{n_T}_k&\overset{P}\to 0,\label{eq:y}
\end{align}
where $\overset{P}\to$ stands for convergence in probability.

In both cases we need a bound for the (factorial) third moment of $N(I,u)$. 
The next result gives such a bound.
As the proof requires the use of the Rice formula for the third (factorial) moment, it is deferred to Subsection \ref{53}.
\begin{lemma}\label{lemma:third}
For $R>0$, there exist constants $A,B$, independent of $R$ and $u$, such that
$$
\E \big[N(R,u)^{[3]}\big]\leq R^3(A+Bu^3)e^{-u^2/2}.
$$
\end{lemma}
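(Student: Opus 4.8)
The plan is to apply Rice's formula \eqref{eq:rice} with $n=3$ and $I=[0,R]$ and to estimate the resulting triple integral. Since the integrand is symmetric in $(s_1,s_2,s_3)$, I restrict to the ordered set $\{0\le s_1\le s_2\le s_3\le R\}$ (at the cost of a factor $3!=6$) and change variables to $\tau_1=s_2-s_1$, $\tau_2=s_3-s_2$; by stationarity the integrand depends only on $(\tau_1,\tau_2)$, and integrating out $s_1\in[0,R-\tau_1-\tau_2]$ gives
\begin{equation*}
\E\big[N(R,u)^{[3]}\big]=6\int_{\{\tau_1,\tau_2\ge0,\ \tau_1+\tau_2\le R\}}(R-\tau_1-\tau_2)\,g(\tau_1,\tau_2)\,d\tau_1\,d\tau_2,
\end{equation*}
where, writing $\bu=u\mathbf{1}$ with $\mathbf{1}=(1,1,1)^t$ and $\Sigma$ for the covariance matrix of $(X(s_1),X(s_2),X(s_3))$,
\begin{equation*}
g(\tau_1,\tau_2)=\E\big[|\dot X(s_1)\dot X(s_2)\dot X(s_3)|\mid X(s_1)=X(s_2)=X(s_3)=u\big]\,p_X(\bu),\qquad p_X(\bu)=\frac{e^{-\frac{u^2}{2}\mathbf{1}^t\Sigma^{-1}\mathbf{1}}}{(2\pi)^{3/2}\sqrt{\det\Sigma}}.
\end{equation*}
Bounding $R-\tau_1-\tau_2\le R$, it suffices to prove $\int\int g\,d\tau_1\,d\tau_2\le(A'+B'u^3)R^2e^{-u^2/2}$ over that simplex (for $R\le 1$ the factor $R-\tau_1-\tau_2$ together with the constraint $\tau_1+\tau_2\le R$ supplies extra powers of $R$, so that range follows a fortiori).

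I treat the two factors of $g$ separately. For the density I use the elementary bound $\mathbf{1}^t\Sigma^{-1}\mathbf{1}\ge 1$, which holds since $\mathbf{1}^t\Sigma^{-1}\mathbf{1}=\sup_{a\ne0}(a^t\mathbf{1})^2/(a^t\Sigma a)\ge(e_1^t\mathbf{1})^2/(e_1^t\Sigma e_1)=1/r(0)=1$; hence $p_X(\bu)\le(2\pi)^{-3/2}(\det\Sigma)^{-1/2}e^{-u^2/2}$, and this is where the factor $e^{-u^2/2}$ of the statement comes from. For the conditional expectation, Lemma \ref{lemma:1}(b) rewrites it as $\E\big[\prod_{k=1}^3|Y_k+\Psi_k\bu|\big]$ with $Y$ centered Gaussian, $\var Y_k\le1$; applying Hölder with exponents $(3,3,3)$, Minkowski in $L^3$ and $\E|Y_k|^3=c_0(\var Y_k)^{3/2}$ bounds it by $C\prod_{k=1}^3(\sqrt{\var Y_k}+u|\Psi_k\mathbf{1}|)$ (recall $\Psi_k\bu=u\,\Psi_k\mathbf{1}$ and $u>0$). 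So the whole matter reduces to bounding
\begin{equation*}
\int\int\frac{1}{\sqrt{\det\Sigma}}\prod_{k=1}^3\big(\sqrt{\var Y_k}+u\,|\Psi_k\mathbf{1}|\big)\,d\tau_1\,d\tau_2
\end{equation*}
over the simplex by $C(1+u^3)R^2$.

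The heart of the proof is a decomposition of the $(\tau_1,\tau_2)$-domain according to which of the gaps $\tau_1,\tau_2,\tau_1+\tau_2$ are small: with a fixed small $\delta$, let (I) $=\{\tau_1,\tau_2\le\delta\}$, (II) $=\{$exactly one of $\tau_1,\tau_2$ is $\le\delta\}$, (III) $=\{\tau_1,\tau_2\ge\delta\}$. On (III), the continuity of $r$, the facts $|r(\tau)|<1$ for $\tau\ne0$ and $r,\dot r\to0$, and the non-degeneracy hypothesis of Theorem \ref{teo} make $\det\Sigma$ bounded below and $\var Y_k,|\Psi_k\mathbf{1}|$ bounded, so the integrand of the last display is $\le C(1+u)^3$ there. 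On (I) and (II) I use \eqref{eq:expansion}. Comparing $\dot X(s_k)$ with the derivative at $s_k$ of the polynomial $p$ interpolating $X$ at the nodes near $s_k$, the interpolation-error identity gives $\sqrt{\var Y_k}\le C\,\ell_k$, where $\ell_k=\prod_{j\ne k}|s_k-s_j|$ on (I), and on (II) — say with $\{s_1,s_2\}$ the close pair — $\ell_1=\ell_2=O(\tau_1)$ and $\ell_3=O(1)$; here the finiteness of $\var X'''$, guaranteed by \eqref{eq:expansion} through order $\tau^6$, controls the implicit constant. Writing $\Psi_k\mathbf{1}=\cov\big(\dot X(s_k)-\dot p(s_k),X\big)\Sigma^{-1}\mathbf{1}$ and using that $\Sigma^{-1}\mathbf{1}$ stays bounded — because $\mathbf{1}$ is orthogonal to the divided-difference directions along which $\Sigma$ degenerates — one also gets $|\Psi_k\mathbf{1}|\le C\,\ell_k$. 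Finally the divided-difference change of variables gives $\det\Sigma=\tfrac14\,\ell_1\ell_2\ell_3\,\det\Gamma$ with $\Gamma$ converging to the covariance matrix of $(X,\dot X,\tfrac12\ddot X)$, which is non-singular by \eqref{eq:expansion}; hence $\det\Sigma\ge c\,\ell_1\ell_2\ell_3$ on (I) and $\det\Sigma\ge c\,\tau_1^2$ on (II). The exponents of $\tau_1,\tau_2$ now match, and the integrand of the last display is $\le C(1+u)^3\sqrt{\ell_1\ell_2\ell_3}=C(1+u)^3\tau_1\tau_2(\tau_1+\tau_2)$ on (I) and $\le C(1+u)^3\tau_1$ on (II) — in both cases integrable, and $6\int\int(R-\tau_1-\tau_2)\,[\cdots]$ over the three regions is $O(R^3(1+u)^3e^{-u^2/2})$. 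Using $(1+u)^3\le8(1+u^3)$ gives the asserted bound $R^3(A+Bu^3)e^{-u^2/2}$ with $A,B$ independent of $R$ and $u$.

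The main obstacle is the estimation in regions (I)--(II): one must show that $\var Y_k$ and the regression means $\Psi_k\mathbf{1}$ decay near the diagonal at exactly the rate that offsets the blow-up of $(\det\Sigma)^{-1/2}$, uniformly in $(\tau_1,\tau_2)$. This is the three-point analogue of the Geman-type computation carried out in the first step of the proof of Theorem \ref{key}, now in the two small variables $\tau_1,\tau_2$, and it is precisely where the full expansion \eqref{eq:expansion} is used — both to keep the confluent covariance matrix $\Gamma$ uniformly non-degenerate and to bound $\var X'''$. Everything else (the Gaussian moment inequalities and the elementary integrations) is routine.
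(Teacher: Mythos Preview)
Your proof follows the same architecture as the paper's: Rice's formula for $n=3$, reduction by stationarity to the variables $(\tau_1,\tau_2)=(h,k)$, the three-region decomposition $\{h,k\le\delta\}$, $\{$one small$\}$, $\{h,k\ge\delta\}$, Gaussian regression via Lemma~\ref{lemma:1}, and the density bound $\mathbf{1}^t\Sigma^{-1}\mathbf{1}\ge1$ (the paper proves the same inequality by diagonalising $\Sigma$ and using $\mathrm{trace}\,\Sigma=3$). Region (III) is handled identically in both.

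The genuine difference is in the near-diagonal analysis (region (I), and by extension (II)). The paper carries out explicit Taylor expansions in the coefficients $d,e$ of \eqref{eq:expansion}: it computes the exact leading terms of $\Delta$, of each $\alpha_i=\Psi_i\mathbf{1}$, and of $\var Y_i$, then expands the triple product $\E\prod|Y_k+\alpha_ku|$ into four pieces $J_1+J_2u+J_3u^2+|\alpha_1\alpha_2\alpha_3|u^3$ and estimates each one via the change of variables $y_j=\gamma_j z_j$, $\gamma_j=\prod_{i\ne j}(s_i-s_j)^2$. Your route replaces all of this by two clean structural facts: the Lagrange interpolation error formula gives $\sqrt{\var Y_k}\le C\ell_k$ (this uses only $\var X'''<\infty$, i.e.\ the existence of the $\tau^6$ term), and the identity $T\mathbf{1}=e_1$ for the divided-difference transition matrix lets you write $\Psi_k\mathbf{1}=\cov(\dot X(s_k)-\dot p(s_k),Z)\,\Sigma_Z^{-1}e_1$ with $\Sigma_Z$ uniformly well-conditioned, yielding $|\Psi_k\mathbf{1}|\le C\ell_k$. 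Combined with H\"older, this gives the integrand bound $C(1+u)^3\sqrt{\ell_1\ell_2\ell_3}$ in one stroke. Your argument is shorter and more conceptual, and it makes transparent that only the \emph{order} of the expansion \eqref{eq:expansion} matters (to ensure $\var X'''<\infty$ and the non-singularity of the confluent matrix $\Gamma$, equivalently $24d>1$), not the specific values of $d,e$; the paper's computations, on the other hand, deliver sharper asymptotics for each ingredient, which may be of independent interest.
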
 

\subsection{Proof of \eqref{eq:y} - Small blocks}
The convergence in \eqref{eq:y} is a consequence of the following lemma.
\begin{lemma}\label{lemma:short}
{\rm (a)} Let $\ell(T)=o(T)$ as $T\to\infty$. We have
$$
{N(\ell(T),u)-\ell(T)C(u)\over \sqrt{\expected}}\overset{P}\to 0,\quad (T\to\infty).
$$
\par\noindent
{\rm (b)} Under the mixing condition \eqref{eq:mixing}, we have 
$$
\E\left(\sum_{k=1}^nY^n_k\right)^2\to 0,\quad (T\to\infty).
$$
\end{lemma}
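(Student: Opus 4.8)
For part (a), the plan is to apply Chebyshev's inequality to the centred variable $N(\ell(T),u)-\ell(T)C(u)$: for every $\e>0$,
\[
\P\!\left(\frac{|N(\ell(T),u)-\ell(T)C(u)|}{\sqrt{\expected}}>\e\right)\;\le\;\frac{\var N(\ell(T),u)}{\e^2\,\expected},
\]
so it suffices to show $\var N(\ell(T),u)/\expected\to0$. Recalling $\expected=\E N(T,u)=TC(u)$ and $\E N(\ell(T),u)=\ell(T)C(u)$, this ratio equals $\big(\var N(\ell(T),u)/\E N(\ell(T),u)\big)\cdot\big(\ell(T)/T\big)$. Fix $t_0>0$. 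For those $T$ with $\ell(T)\ge t_0$, Theorem~\ref{key}---applied with its time variable set to $\ell(T)$ and level $u=u_T$, using its uniformity in the time variable over $[t_0,\infty)$ and that $u_T\to\infty$ (which is forced by $\expected\to\infty$)---shows the first factor tends to $2$, while the second tends to $0$ because $\ell(T)=o(T)$; hence the ratio vanishes. For the $T$ with $\ell(T)<t_0$ I would instead use monotonicity, $N(\ell(T),u)\le N(t_0,u)$, together with $\E[N(t_0,u)^2]=\var N(t_0,u)+\big(t_0C(u)\big)^2=O\big(C(u)\big)$ (again by Theorem~\ref{key}), which yields $\var N(\ell(T),u)/\expected=O(1/T)\to0$. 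In both regimes $\var N(\ell(T),u)/\expected\to0$, and Chebyshev concludes.

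For part (b), I would start from
\[
\E\Big(\sum_{k=1}^{n_T}Y^{n_T}_k\Big)^2=\sum_{k=1}^{n_T}\var Y^{n_T}_k+\sum_{j\ne k}\cov\big(Y^{n_T}_j,Y^{n_T}_k\big),
\]
and treat the two sums separately. For the diagonal, stationarity gives $\var Y^{n_T}_k=\var N(q_T,u)/(2\,\expected)$, and since $q_T\to\infty$, Theorem~\ref{key} gives $\var N(q_T,u)\sim 2q_TC(u)$; hence each term is $\sim q_TC(u)/\expected=q_T/T$ and the diagonal sum is $\sim n_Tq_T/T$, which tends to $0$ by \eqref{eq:rates}. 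For the off-diagonal part, the key observation is that for $j<k$ the blocks $J_j$ and $J_k$ are separated by a gap of length at least $(k-j)p_T$. Since $\X$ is Gaussian and $\alpha$-mixing, the Kolmogorov--Rozanov inequality $\rho(t)\le\mathbf C\,\alpha(t)$ holds (cf.\ the proof of Lemma~\ref{lemma:covariance}), and since $N(J_k,u)$ is square-integrable (by Rice's formula $\E[N(q_T,u)^2]<\infty$) and measurable with respect to the $\sigma$-algebra generated by $X$ on $J_k$, the definition of the $\rho$-mixing coefficient---together with stationarity, $\var Y^{n_T}_j=\var Y^{n_T}_k=\var Y^{n_T}_1$, and the monotonicity of $\rho$---gives
\[
\big|\cov\big(Y^{n_T}_j,Y^{n_T}_k\big)\big|\;\le\;\rho\big((k-j)p_T\big)\,\var Y^{n_T}_1\;\le\;\mathbf C\,\alpha\big((k-j)p_T\big)\,\var Y^{n_T}_1 .
\]
Summing over $j\ne k$ and using $\alpha(t)\sim t^{-\mu}$,
\[
\sum_{j\ne k}\big|\cov\big(Y^{n_T}_j,Y^{n_T}_k\big)\big|\;\le\;2\,\big(n_T\var Y^{n_T}_1\big)\sum_{m\ge1}\mathbf C\,\alpha(mp_T)\;\le\;\mathbf C'\,\big(n_T\var Y^{n_T}_1\big)\,p_T^{-\mu}\sum_{m\ge1}m^{-\mu},
\]
where the last series converges because $\mu>4>1$. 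Since $n_T\var Y^{n_T}_1\sim n_Tq_T/T\to0$ and $p_T^{-\mu}\to0$, the off-diagonal contribution tends to $0$ (in fact it is of order $T^{\,c-b(1+\mu)}$, whose exponent is negative since $c<b$). Therefore $\E\big(\sum_{k}Y^{n_T}_k\big)^2\to0$, which is (b), and (b) implies \eqref{eq:y} by Chebyshev's inequality once more.

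I expect the off-diagonal estimate in (b) to be the delicate step, since it demands covariances of the \emph{unbounded} functionals $N(J_k,u)$. The way around it is to pass from $\alpha$-mixing to $\rho$-mixing---legitimate for a Gaussian process by Kolmogorov--Rozanov---so that only the \emph{second} moments of the blocks enter, and these are delivered directly by Theorem~\ref{key}; after that it only remains to check that the polynomial exponent $\mu$ together with the block rates in \eqref{eq:rates} make the resulting sum summable. Everything else reduces to Chebyshev's inequality and the variance asymptotics of Theorem~\ref{key}.
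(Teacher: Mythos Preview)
Your argument for part~(a) is essentially the paper's: both reduce to $\var N(\ell(T),u)/\expected\to 0$ via Theorem~\ref{key}, and you are simply a bit more careful about the regime $\ell(T)<t_0$.

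For part~(b) your route is genuinely different from the paper's, and in fact more economical. The paper controls the cross terms by the Davydov--type covariance inequality
\[
|\E(Y^n_kY^n_{k+\ell})|\le 8\,\alpha(\ell p_T)^{1/3}\big(\E|Y^n_1|^3\big)^{2/3},
\]
which forces a detour through the \emph{third} moment of the blocks and hence through Lemma~\ref{lemma:third} (the Rice-formula bound for $\E N^{[3]}$). You instead exploit the Gaussianity of $\X$: by Kolmogorov--Rozanov, $\rho(t)\le \mathbf C\,\alpha(t)$ with $\rho$ the \emph{full} maximal-correlation coefficient over all $L^2$ functionals of the two $\sigma$-fields, so
\[
|\cov(Y^n_j,Y^n_k)|\le \rho\big((k-j)p_T\big)\,\var Y^n_1\le \mathbf C\,\alpha\big((k-j)p_T\big)\,\var Y^n_1,
\]
and only the \emph{second} moment, delivered directly by Theorem~\ref{key}, is needed. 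Your summation then goes through with room to spare (indeed $c<b$ and $\mu>1$ already suffice). The trade-off is that the paper's inequality would survive in a non-Gaussian $\alpha$-mixing setting with a third-moment hypothesis, whereas your shortcut is Gaussian-specific; since the ambient process here is Gaussian and Lemma~\ref{lemma:third} is needed anyway for the big-block Lyapunov condition (Lemma~\ref{lemma:delta}), your simplification costs nothing in the overall proof of Theorem~\ref{teo}. One small caveat: the paper's Definition~\ref{remark:a1} writes $\rho$ with $\cov$ rather than the correlation; you are tacitly (and correctly) using the standard maximal-correlation version, under which $|\cov(f,g)|\le \rho(t)\sqrt{\var f\,\var g}$.
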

\begin{proof}In order to verify (a), we write
$$
\E\left({N(\ell(T),u)-\ell(T)C(u)\over \sqrt{\expected}}\right)^2={\var N(\ell(T),u)\over \expected}\sim {\ell(T)\over T}\to 0.
$$
To see (b), we use a moment inequality under mixing (see 1.2.2 in \cite{mixing}), that in our context reads
$$
|\E(Y^n_k,Y^n_{k+\ell})|\leq 8 \alpha(\ell p_T)^{1/3}\left(\E(|Y^n_1|^3)\right)^{2/3}.
$$
So, applying Lemma \ref{lemma:third} and the mixing rate, we obtain
\begin{multline*}
|\E(Y^n_k,Y^n_{k+\ell})|\leq 8 \ell^{-\mu/3}T^{-b\mu/3}\left[{(A+Bu^3)q_T^{3}e^{-u^2/2}\over \expected^{3/2}}\right]^{2/3}
\leq 8 \ell^{-\mu/3}T^{-b\mu/3}\left[{(A+Bu^2)q_T^{2}e^{-u^2/3}\over \expected}\right]\\
\leq8 \ell^{-\mu/3}\left[(A+Bu^2)T^{2c{-b\mu/3}- \gamma/3-2/3}\right]
\leq 8 \ell^{-\mu/3}T^{2b{-b\mu/3}- \gamma/3-2/3+\epsilon}.
\end{multline*}
Note that as $\mu>4$ the series $\sum_{\ell=1}^{\infty}\ell^{-\mu/3}<\infty$. So
\begin{multline*}
\E\left(\sum_{k=1}^nY^n_k\right)^2=n\E(Y^n_1)^2+2(n-1)\E(Y^n_1Y^n_2)+\cdots+4\E(Y^n_1Y^n_{n-2})+2\E(Y^n_1Y^n_{n-1})\\
\leq \frac1{\expected}K n q_Te^{-u^2/2}+
16nT^{2b{-b\mu/3}-\gamma/3-2/3+\epsilon}
\sum_{\ell=1}^{\infty}\ell^{-\mu/3}
\\
\leq \frac1{\expected}K n q_Te^{-u^2/2}+
KT^{1/3+b-b\mu/3-\gamma/3+\epsilon}
\to 0,
\end{multline*}
because $\epsilon>0$ can be chosen arbitrarily small and 
$\mu b>1$. 
This concludes the proof of the Lemma.
\end{proof}

\subsection{Proof of \eqref{eq:x} - Big blocks}
In order to prove \eqref{eq:x}, we use the Central Limit Theorem
proved by Lindeberg's method as presented in Theorem 1 in \cite{bdlr}. 
The next three lemmas verify the hypothesis of Theorem 1 in \cite{bdlr}.
\begin{lemma}The mixing rate of the process $\X$ implies that as $T\to\infty$:
$$
\sum_{k=1}^{n_{{ T}}}|\cov(e^{it(X^n_1+\cdots+X^n_{k-1})},e^{itX^n_k})| \to 0.
$$
\end{lemma}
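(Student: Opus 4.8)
The plan is to control each covariance directly, by the elementary covariance inequality for $\alpha$-mixing bounded random variables, and then sum over the blocks. First I would identify the relevant $\sigma$-algebras: the partial sum $X^n_1+\dots+X^n_{k-1}$ is a deterministic function of $N(I_1,u),\dots,N(I_{k-1},u)$, hence it is measurable with respect to $\mathcal F_{-\infty}^{a_k}$, where $a_k=(k-1)p_T+(k-2)q_T$ is the right endpoint of $I_{k-1}$; similarly $X^n_k$ is a function of $N(I_k,u)$, hence measurable with respect to $\mathcal F_{b_k}^{\infty}$, where $b_k=(k-1)(p_T+q_T)$ is the left endpoint of $I_k$. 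By construction of the big blocks $I_k$ and the small blocks $J_k$, the separation between these two $\sigma$-algebras equals $b_k-a_k=q_T$ for every $k$. Moreover $|e^{it(X^n_1+\dots+X^n_{k-1})}|=|e^{itX^n_k}|=1$, so both factors are bounded by $1$ in modulus, uniformly in $t\in\R$.

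Next I would invoke the classical covariance inequality for strongly mixing sequences (see, e.g., \cite{mixing}): if $U$ and $V$ are complex random variables with $|U|,|V|\le 1$, respectively $\mathcal F_{-\infty}^{a}$- and $\mathcal F_{b}^{\infty}$-measurable with $a<b$, then $|\cov(U,V)|\le C\,\alpha(b-a)$ for a universal constant $C$ (it suffices to apply the real-valued inequality to the real and imaginary parts). Applied with $b-a=q_T$ this gives, for every $k=1,\dots,n_T$ and for every $t$,
\[
\big|\cov\big(e^{it(X^n_1+\dots+X^n_{k-1})},e^{itX^n_k}\big)\big|\le C\,\alpha(q_T).
\]
Summing over $k$, using $n_T=T^{1-b}$ and $q_T=T^{c}$ together with the mixing rate \eqref{eq:mixing} in the form $\alpha(q_T)\sim q_T^{-\mu}=T^{-c\mu}$, I get
\[
\sum_{k=1}^{n_T}\big|\cov\big(e^{it(X^n_1+\dots+X^n_{k-1})},e^{itX^n_k}\big)\big|\le C\,n_T\,\alpha(q_T)=O\big(T^{\,1-b-c\mu}\big).
\]
To finish it remains to see that the exponent is negative. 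Since $\mu b>1$ we have $b>1/\mu$, so the interval $(1/\mu,b)$ is non-empty; assuming, as we may, that $c$ was chosen in it, we get $\mu c>1>1-b$, hence $1-b-c\mu<0$ and the sum tends to $0$ as $T\to\infty$.

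There is no real analytic obstacle here: the only delicate points are bookkeeping, namely that the gap between the block carrying the partial sum $X^n_1+\dots+X^n_{k-1}$ and the block carrying $X^n_k$ is exactly $q_T$ (the same for all $k$), and that the admissible range of $c$ is compatible with the inequality $\mu c>1-b$ used to absorb the factor $n_T$. Once the exponential factors are recognised as uniformly bounded, the statement reduces to a one-line application of the $\alpha$-mixing covariance inequality together with the polynomial mixing rate.
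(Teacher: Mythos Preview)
Your argument is correct and is essentially the paper's own proof: each covariance is bounded by a constant times $\alpha(q_T)$ via the $\alpha$-mixing covariance inequality for bounded random variables, and summing $n_T$ such terms gives $O(T^{1-b-\mu c})\to 0$. Your handling of the exponent is in fact cleaner than the paper's: the paper writes $1-b-\mu c\le 1-b-\mu b$, which is false under the standing assumption $c<b$, whereas you correctly note that one must additionally take $c>1/\mu$ (compatible with $c<b$ since $\mu b>1$) to force $1-b-\mu c<0$.
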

\begin{proof}
We use  Kolmogorov-Rozanov inequality  as in the proof of (c) in Lemma \ref{lemma:covariance}.
Since $1-b-\mu c\leq 1-b-\mu b< -b<0$ by \eqref{eq:rates}, we have
$$
\sum_{k=1}^{n_{{ T}}}|\cov(e^{it(X^n_1+\cdots+X^n_{k-1})},e^{itX^n_k})|\leq \mathbf C n_{{T}}\alpha(q_T)\sim
T^{1-b-\mu c}\to 0.
$$
\end{proof}
\begin{lemma}
As $T\to\infty$:
$$
\sum_{k=1}^n\var X^n_k=n\var X^n_1\sim {np_TC(u)\over \expected}\to 1.
$$
\end{lemma}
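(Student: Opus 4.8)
The statement asserts that $n\var X^n_1\sim np_TC(u)/\expected \to 1$, so there are really two things to check: first, that $\var X^n_1 \sim p_TC(u)/\expected$, and second, that $np_TC(u)/\expected \to 1$. The second is immediate: by definition $\expected = TC(u)$, so $np_TC(u)/\expected = n_Tp_T/T$, which tends to $1$ by the rate relation \eqref{eq:rates}. So the real content is the first equivalence, and this is where I would spend the effort.

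For $\var X^n_1$, recall that $X^n_1 = (N(I_1,u)-p_TC(u))/\sqrt{2\expected}$ with $|I_1|=p_T$, so by stationarity $\var X^n_1 = \var N(p_T,u)/(2\expected)$. Thus the claim $\var X^n_1 \sim p_TC(u)/\expected$ is equivalent to
$$
\frac{\var N(p_T,u)}{2\, p_T C(u)} \to 1,\qquad (T\to\infty),
$$
that is, $\var N(p_T,u)/\E N(p_T,u) \to 2$ along the blocks of length $p_T = T^b$ with the level $u = u_T$ growing so that $\expected\sim T^\gamma$. This is precisely the content of Theorem \ref{key}, \emph{provided} its hypotheses are met and provided the convergence there can be invoked along the sequence $(p_T, u_T)$. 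First I would check the hypotheses: the expansion \eqref{eq:expansion} gives Geman's condition (G), and by part (c) of Lemma \ref{lemma:covariance} the $\alpha$-mixing rate \eqref{eq:mixing} forces $r,\dr,\ddr$ to be integrable, so \eqref{eq:integrable} holds and $r(\tau),\dr(\tau)\to 0$; hence all assumptions of Theorem \ref{key} are in force. Next, since $p_T = T^b\to\infty$, we eventually have $p_T\geq t_0$ for any fixed $t_0>0$, and Theorem \ref{key} gives the convergence $\var N(T',u)/\E N(T',u)\to 2$ as $u\to\infty$ \emph{uniformly in $T'\in[t_0,\infty)$}. It is exactly this uniformity that lets me substitute $T' = p_T$ and $u = u_T$ simultaneously: along the sequence $T\to\infty$ we have $u_T\to\infty$ (since $C(u_T)=\expected/T\sim T^{\gamma-1}\to 0$ forces $u_T\to\infty$) and $p_T\to\infty$, so by uniformity $\var N(p_T,u_T)/\E N(p_T,u_T)\to 2$. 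Combining with $\E N(p_T,u_T)=p_TC(u_T)$ yields $\var X^n_1 \sim p_TC(u)/\expected$, and multiplying by $n$ gives the result.

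The only subtlety, and the step I would be most careful about, is the interplay between the two limits $u_T\to\infty$ and $p_T\to\infty$: one cannot just quote ``Theorem \ref{key} with $T = p_T$'' naïvely, because in Theorem \ref{key} the time variable is a silent parameter and only $u$ is sent to infinity. The uniformity clause is what makes the double limit legitimate, and I would state this explicitly — given $\e>0$, Theorem \ref{key} produces a threshold $u_0$ (not depending on $T'\geq t_0$) beyond which $|\var N(T',u)/\E N(T',u)-2|<\e$; then for $T$ large enough both $u_T>u_0$ and $p_T\geq t_0$ hold, so the inequality applies with $(T',u)=(p_T,u_T)$. Everything else is bookkeeping with the rate relations in \eqref{eq:rates} and the definition $\expected = TC(u)$.
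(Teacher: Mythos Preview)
Your proof is correct and follows exactly the reasoning the paper intends. In fact the paper states this lemma without proof, treating the chain $n\var X^n_1 \sim np_TC(u)/\expected \to 1$ as self-evident from Theorem~\ref{key} and the rate relation \eqref{eq:rates}; you have simply made explicit the two ingredients (the variance asymptotics from Theorem~\ref{key} applied on blocks of length $p_T$, and $n_Tp_T/T\to1$) and, quite properly, highlighted that the uniformity in $T$ in Theorem~\ref{key} is what justifies substituting the moving pair $(p_T,u_T)$.
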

In the next lemma we check hypothesis $H_\delta$ of Theorem 1 in \cite {bdlr}
with $\delta=1$.
\begin{lemma}[$H_\delta$ for $\delta=1$]\label{lemma:delta}
As $n\to\infty$, we have
$$
A_n=\sum_{k=1}^n\E|X^n_k|^{3}\leq {n\over \expected^{3/2}}\E|N(p_T,u)-p_TC(u)|^{3}\to 0.
$$
\end{lemma}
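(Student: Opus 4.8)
The plan is to reduce the statement to a moment estimate that follows from Lemma \ref{lemma:third}, exploiting the smallness of the block size $p_T$ relative to $\expected$. First I would record the elementary inequality $|X^n_k|^3 \le (X^n_1$-type bound), more precisely use that $X^n_k$ has, by stationarity of $\X$, the same distribution as $(N(p_T,u)-p_TC(u))/\sqrt{2\expected}$, which gives the first inequality in the statement:
$$
A_n = \sum_{k=1}^n \E|X^n_k|^3 = \frac{n}{(2\expected)^{3/2}}\,\E\bigl|N(p_T,u)-p_TC(u)\bigr|^3 \le \frac{n}{\expected^{3/2}}\,\E\bigl|N(p_T,u)-p_TC(u)\bigr|^3,
$$
so it remains to control $\E|N(p_T,u)-p_TC(u)|^3$.

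The second step is to bound this centered absolute third moment by the third factorial moment $\E[N(p_T,u)^{[3]}]$ plus lower-order corrections. Since $N^{[3]} = N(N-1)(N-2)$ and $N\ge 0$, expanding and using $\E N(p_T,u) = p_TC(u) = \expected\, p_T/T$ together with Theorem \ref{key} (which controls $\var N(p_T,u)$, hence $\E N(p_T,u)^2$, by $2\,p_TC(u)$ up to a constant, uniformly since $p_T\to\infty$), one gets
$$
\E\bigl|N(p_T,u)-p_TC(u)\bigr|^3 \le K\Bigl(\E[N(p_T,u)^{[3]}] + (p_TC(u))^{3} + (p_TC(u))\Bigr)
$$
for a universal constant $K$; the point is that all the polynomial-in-$N$ terms produced by the binomial expansion of $|N-\E N|^3$ are dominated by the factorial moment of top degree plus powers of the mean, using that the mean is bounded (in the regime $u=u_T$, $C(u)\to 0$) so that $(p_TC(u))^j$ for $j\le 3$ is controlled by $(p_TC(u))\vee (p_TC(u))^3$.

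Third, I would plug in the bounds. Lemma \ref{lemma:third} with $R=p_T$ gives $\E[N(p_T,u)^{[3]}]\le p_T^3(A+Bu^3)e^{-u^2/2}$, while $p_TC(u) = \tfrac1\pi p_T e^{-u^2/2}$, so
$$
A_n \le \frac{n}{\expected^{3/2}}\, K\, p_T^3 (A'+B'u^3) e^{-u^2/2}.
$$
Now substitute the rates: $\expected\sim T^\gamma$, $e^{-u^2/2} = \pi C(u) = \pi\expected/T \sim \pi T^{\gamma-1}$, hence $u^2 \sim 2(1-\gamma)\log T$, $n=T^{1-b}$, $p_T=T^b$; this yields
$$
A_n \le K\, T^{1-b}\cdot T^{3b}\cdot (\log T)^{3/2}\cdot T^{\gamma-1}\cdot T^{-3\gamma/2} = K\,(\log T)^{3/2}\, T^{2b-\gamma/2}.
$$
Since $b<\gamma/4$ we have $2b-\gamma/2<0$, so the logarithmic factor is beaten and $A_n\to 0$. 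The only genuinely delicate point is the passage from the \emph{factorial} third moment to the \emph{centered absolute} third moment in the second step: one must be careful that the subtractions in $N^{[3]}$ versus $|N-\E N|^3$, expanded against a non-fixed mean $\E N = p_TC(u)$, do not produce a term of a larger order than anticipated — but because here $p_TC(u)\to 0$ (the mean number of crossings in a single big block vanishes, as $p_T=T^b$ with $b<\gamma/4<1$), the mean powers are negligible and the factorial moment dominates, so this obstacle is mild. The rest is the bookkeeping of exponents already carried out above.
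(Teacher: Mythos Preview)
Your argument is correct and follows essentially the paper's route: reduce $\E|N-\E N|^3$ to $\E N^{[3]}$ plus powers of $\E N$, invoke Lemma~\ref{lemma:third}, and verify the exponent $2b-\gamma/2<0$. One correction, though: your closing claim that $p_TC(u)\to 0$ is false in general. Since $p_TC(u)\sim \pi^{-1}T^{\,b+\gamma-1}$ and the standing constraints are only $1/\mu<b<\gamma/4$, the exponent $b+\gamma-1$ can be positive (e.g.\ $\gamma=0.9$, $\mu=5$, $b=0.22$), so the block mean may diverge. Fortunately your computation does not actually rely on this: the crude bounds $(p_TC(u))^3=\pi^{-3}p_T^3e^{-3u^2/2}\le \pi^{-3}p_T^3e^{-u^2/2}$ and $p_TC(u)\le p_T^3e^{-u^2/2}$ (for large $T$) already absorb the mean powers into your $K\,p_T^3(A'+B'u^3)e^{-u^2/2}$, and the exponent arithmetic $2b-\gamma/2<0$ then finishes the job. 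The paper handles this same piece by checking $n(\E N)^2/\expected^{3/2}\sim T^{-1+b+\gamma/2}\to 0$ directly (using $b<\gamma/4<1-\gamma/2$), with no smallness claim on $p_TC(u)$.
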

\begin{proof}
Denote $N=N(I_1,u)$.
We use the following rough bound 
$$
\E|N-\E N|^3\leq 8\left[\E(N^3)+(\E N)^3\right]\leq 16 \E(N^3).
$$
where we rely on $(A+B)^3\leq 8(A^3+B^3)$, for $A,B>0$; and Jensen's inequality for the convex function
$f(x)=|x|^3$.
Then
$$
\E|X^n_1|^{3}\leq \frac{16}{\expected^{3/2}}\E(N^3).
$$
The proof is then split into several steps.
\begin{enumerate}
\item
We expand the third factorial moment of the crossings
$$
\E N^{[3]}:=\E(N(N-1)(N-2))=\E N^3-3\E N^2+2\E N.
$$
\item We verify
$$
{n\E N\over \expected^{3/2}}={np_TC(u)\over \expected^{3/2}}\sim{1\over\sqrt{\expected}}
\to 0.
$$
\item We verify
\begin{multline*}
{n\E N^2\over \expected^{3/2}}={n(\var N+(\E N)^2)\over \expected^{3/2}}
\sim {n(\E N+(\E N)^2)\over \expected^{3/2}}
\sim {n(\E N)^2\over \expected^{3/2}}\\
\sim T^{1-b}p_T^2C(u)^2T^{-3\gamma/2}\sim T^{1-b}T^{2b}T^{2(\gamma-1)}T^{-3\gamma/2}
\sim T^{-1+b+\gamma/2}
\to 0,
\end{multline*}
as
$b<\gamma/4$.
\item It remains to prove 
\begin{equation*}
{n\E N^{[3]}\over \expected^{3/2}}\to 0.
\end{equation*}
This step is a consequence of Lemma \ref{lemma:third}.  
In fact,
\begin{equation*}
{n\E(N^{[3]})\over \expected^{3/2}}
\leq {np_T^3(A+Bu^3)e^{-u^2/2}\over T^{3\gamma/2}}
\leq {KT^{1+2b+\gamma-1-3\gamma/2+\epsilon}}
\leq T^{2b-\gamma/2+\epsilon}\to 0,
\end{equation*}
because $4b<\gamma$ and  $\epsilon$ can be taken arbitrarily small.
\end{enumerate}
\end{proof}

\subsection{Proof of Lemma \ref{lemma:third}}\label{53}
Below, based on Rice formula, we present the proof for the bound on the third moment given in Lemma \ref{lemma:third}. 
In this section $K,K'$ stands for meaningless constants whose value may change from line to line.
\begin{proof}[Proof of Lemma \ref{lemma:third}] 
Let $\bu=(u,u,u)$, $\C=\{X(s_1)=X(s_2)=X(s_3)=u\}$ and 
$$
A(s_1,s_2,s_3)=\E(|\dot{X}(s_1)\dot{X}(s_2)\dot{X}(s_3)|\mid \C)p_{X(s_1),X(s_2),X(s_3)}(\bu).
$$
Rice formula \eqref{eq:rice} for $n=3$ states
\begin{equation*}
\E (N(R,u)^{[3]})=\iiint_{[0,R]^3}A(s_1,s_2,s_3)ds_1ds_2ds_3.
\end{equation*}
As the process is stationary, we have $A(s_1,s_2,s_3)=A(0,s_2-s_1,s_3-s_1)$, so  we change variables according to 
$$
s=s_1,\quad h=s_2-s_1,\quad k=s_3-s_2.
$$
We have
$$
\aligned
\E (N^{[3]}(R,u))
&=6\iiint_{\{0\leq s_1\leq s_2\leq s_3\leq R\}}A(0,s_2-s_1,s_3-s_1)ds_1ds_2ds_3\\
&=6\iint_{\{0\leq h+k\leq R\}}\left(\int_0^{R-h-k}A(0,h,h+k)ds\right)dhdk
\\
&=6\int_0^{R}dk\int_{0}^{R-k}(R-h-k)A(0,h,h+k)dh
\\
&\leq 
6\int_0^{\delta}dk\int_{0}^{\delta}(R-h-k)A(0,h,h+k)dh\\
&\quad+ 
12\int_{\delta}^{R}dk\int_{0}^{\delta}(R-h-k)A(0,h,h+k)dh\\
&\quad+ 
6\int_{\delta}^{R}dk\int_{\delta}^{R-k}(R-h-k)A(0,h,h+k)dh=:I_1+I_2+I_3.\\
\endaligned
$$
In conclusion, we have to bound the three integrals that appear above. 
The proof is divided into several steps. \\

\noindent{\it First step:} General facts. 
Since $(X(0),X(h),X(h+k))$ is non-degenerated by hypothesis, 
its covariance matrix 
$$
\Sigma=\left[
\begin{matrix}
1&r(h)&r(h+k)\\
r(h)&1&r(k)\\
r(h+k)&r(k)&1\\
\end{matrix}
\right].
$$
is not singular.

The density is
$$
p_{X(0),X(h),X(h+k)}(\bu)=\frac1{(2\pi)^{3/2}\sqrt{\Delta}}\exp\left({-\frac12\bu\Sigma^{-1}\bu^t}\right).
$$

\noindent{\bf Claim:}
We have
\begin{equation}\label{lemma:density}
\bu\Sigma^{-1}\bu^t> u^2. 
\end{equation}
\newcommand{\vp}{\kappa}
\begin{proof}[Proof of the Claim] As $\Sigma$ is symmetric and positive definite,
we denote its three eigenvalues by
$$
0<\vp_1\leq\vp_2\leq\vp_3,
$$ 
and introduce the diagonal matrix $D=\text{diag}(\vp_1,\vp_2,\vp_3)$. Observe that
$$
\vp_1+\vp_2+\vp_3=\text{trace}(\Sigma)=3,
$$
so $\vp_3<3$.
If $P$ denotes the corresponding orthogonal matrix, such that
$\Sigma P=PD$,
then
$\Sigma^{-1}=PD^{-1}P^t$.
So, denoting $\bv=(v_1,v_2,v_3)=\bu P$, 
\begin{equation*}
\bu\Sigma^{-1}\bu^t
=\bv D^{-1}\bv^t
={v_1^2\over\vp_1}+{v_2^2\over\vp_2}+{v_3^2\over\vp_3}\geq 
\frac1{\vp_3}\bv\bv^t>\frac13\bu PP^t\bu^t=\frac13 3u^2=u^2,
\end{equation*}
completing the proof.
\end{proof}
Hence, the exponential factor in the density is bounded by $e^{-u^2}$. 
The denominator of this density must be bounded jointly with the remaining factors in the integrals.


We now specialize Lemma \ref{lemma:1} to the case $n=3$.
We use the notation
$$
\matrixalpha
=
\Sigma_{10}\Sigma^{-1}
=
\left[
\begin{matrix}
\Psi_1\\ \Psi_2\\ \Psi_3
\end{matrix}
\right]=
\left[
\begin{matrix}
\alpha_{11}&\alpha_{{12}}&\alpha_{{13}}\\ 
\alpha_{{21}}&\alpha_{22}&\alpha_{{23}}\\ 
\alpha_{{31}}&\alpha_{{32}}&\alpha_{33}
\end{matrix}
\right],
$$
$$
r_{ij}=\E X_iX_j,\quad 
\dr_{ij}=\E(\dx_iX_j),\quad
\ddr_{ij}=\E(\dx_i\dx_j).
$$
$$
\Sigma_{10}=\left[
\begin{matrix}
\dr_{11}&\dr_{12}&\dr_{13}\\ 
\dr_{21}&\dr_{22}&\dr_{23}\\ 
\dr_{31}&\dr_{32}&\dr_{33}\\
\end{matrix}
\right]
=\left[
\begin{matrix}
0&\dr_{12}&\dr_{13}\\ 
{-}\dr_{12}&0&\dr_{23}\\ 
{-}\dr_{13}&{-}\dr_{32}&0\\
\end{matrix}
\right]
=\left[
\begin{matrix}
0&{-}\dr(h)&{-}\dr(h+k)\\ 
\dr(h)&0&{-}\dr(k)\\ 
\dr(h+k)&\dr(k)&0\\
\end{matrix}
\right],
$$
$$
\Sigma_{11}=\left[
\begin{matrix}
\ddr_{11}&\ddr_{12}&\ddr_{13}\\ 
\ddr_{21}&\ddr_{22}&\ddr_{23}\\ 
\ddr_{31}&\ddr_{32}&\ddr_{33}\\
\end{matrix}
\right]=\left[
\begin{matrix}
-1&-\ddr(h)&-\ddr(h+k)\\ 
-\ddr(h)&-1&-\ddr(k)\\ 
-\ddr(h+k)&-\ddr(k)&-1\\
\end{matrix}
\right].
$$
Recall that from Lemma \ref{lemma:1} we have
\begin{equation*}
\E(|\dot{X}(s_1)\dot{X}(s_2)\dot{X}(s_3)|\mid \C)
=\E(|Y_1-\Psi_1\bu||Y_2-\Psi_2\bu||Y_3-\Psi_3\bu|).
\end{equation*}

\noindent{\it Second step:} First integral. 
We make the Taylor expansion of each component of the integrand as $(h,k)\to(0,0)$ 
in order to prove that the integral is convergent. 
As a consequence we obtain a bound of the desired form for this term. 
The following result is taken from Proposition 5.9, equation (5.22) in \cite{aw}. 
We have
$$
\Delta:=\det(\Sigma)=1+2r(h)r(k)r(h+k)-r(h)^2-r(k)^2-r(h+k)^2,
$$
that, for $(h,k)\to(0,0)$ satisfies
\begin{equation}\label{eq:delta}
\Delta
\sim\left({24d-1\over{4}}\right){{h}}^2{{k}}^2(h+k)^2.
\end{equation}


\noindent{\bf Claim:} (On the regression coefficients in $\matrixalpha$.) 
{Denote $\alpha_i=\Psi_i (1,1,1)^t=\alpha_{1i}+\alpha_{2i}+\alpha_{3i},\ (i=1,2,3)$. }
As $(h,k)\to 0$ we have
\begin{align*}
\alpha_1&\sim{-}\left( {8d^2+10e\over 24d-1}\right)h(h+k)(2h+k),\\
\alpha_2&\sim \left({8d^2+10e\over24d-1}\right)hk(h-k),\\
\alpha_3&\sim {-}\left({8d^2+10e\over24d-1}\right)k(h+k)(2k+h).
\end{align*}

\begin{proof}[Proof of the Claim.]
We have
$$
\Sigma^{-1}=\frac1\Delta
\left[
\begin{matrix}
1-r^2(k)		&r(h+k)r(k)-r(h)		&r(h)r(k)-r(h+k)\\
r(h+k)r(k)-r(h)		&1-r^2(h+k)			&r(h)r(h+k)-r(k)\\
r(h)r(k)-r(h+k)		&r(h)r(h+k)-r(k)		&1-r^2(h)
\end{matrix}
\right],
$$
so 
\begin{align}\label{eq:DA}
\Delta\alpha_1	&=-[1-r(h+k)][1+r(h+k)-r(h)-r(k)]\dr(h) 
		-(1-r(h))(1+r(h)-r(k)-r(h+k))\dr(h+k),\notag\\
\Delta\alpha_2	&=(1-r(k))(1+r(k)-r(h)-r(h+k))\dr(h)
		-(1-r(h))(1+r(h)-r(k)-r(h+k))\dr(k),\notag\\
\Delta\alpha_3	&=(1-r(k))(1+r(k)-r(h)-r(h+k))\dr(h+k)
		+(1-r(h+k))(1+r(h+k)-r(h)-r(k))\dr(k).
\end{align}
We perform the Taylor expansions for  $\alpha_i=\alpha_i({{h}},{{k}})$ ($i=1,2,3$): 
\begin{align*}
\Delta\alpha_1({{h}},{{k}})	&\sim{-}\left(\frac{4 d^{2}+5 e}2\right)h^3k^2(h+k)^3(2h+k),\\
\Delta\alpha_2({{h}},{{k}})	&\sim2d^2h^3k^3(h+k)^2(h-k),\\
\Delta\alpha_3({{h}},{{k}})	&\sim{-}\left(\frac{4d^{2}+5e}{2}\right)h^3k^3(h+k)^2(k-h),
\end{align*}
that in view of \eqref{eq:delta}, give the results.
\end{proof}

\noindent{\bf Claim:} For $i=1$ and $i=3$ we have 
$$
\var Y_i\sim {4(16d^2 - d-10e)\over 24d-1} (h^2+hk+k^2),
$$
and for $i=2$,
\begin{equation*}
 \var Y_2\sim {4d(16d - 1)\over 24d-1} (h^2+hk+k^2).
 \end{equation*}
 
\begin{proof}[Proof of the Claim.]
We depart from
\begin{equation*}
1=\var \dx(s_1)=\var Y_1-(\alpha_{12}\dr_{12}+\alpha_{13}\dr_{13}),
\end{equation*}
and expand the expression
\begin{equation*}
\Delta(\alpha_{12}\dr_{12}+\alpha_{13}\dr_{13})(h,k)
=-\left(16d^{2} - d - 10e\right)h^2k^2(h+k)^2(h^2+hk+k^2)
+ \frac{1}{4}{\left(24d - 1\right)} h^2 k^{2}(h+k)^2.
\end{equation*}
We conclude
\begin{equation*}
\var Y_1=1-(\alpha_{12}\dr_{12}+\alpha_{13}\dr_{13})(h,k)\sim
{4(16d^2 - d-10e)\over 24d-1} (h^2+hk+k^2).
\end{equation*}
Similar computations hold for $i=3$. For $i=2$, we compute
 \begin{equation*}
 1=\var \dx_i=\var Y_2-(\alpha_{21}\dr_{21}+\alpha_{23}\dr_{23}).
 \end{equation*}
Besides,
 \begin{equation*}
 \Delta(\alpha_{21}\dr_{21}+\alpha_{23}\dr_{23})(h,k)
 \sim \frac14(24d-1)h^2k^2(h+k)^2
 -(16d^{2} - d) h^2 k^2(h+k)^2(h^2+hk+k^2),
 \end{equation*}
and in consequence
 \begin{equation*}
 \var Y_2=1-(\alpha_{21}\dr_{21}+\alpha_{23}\dr_{23})(h,k)\sim
 {4d(16d - 1)\over 24d-1} (h^2+hk+k^2).
 \end{equation*}
This proves the claim.
\end{proof}
\noindent{\bf Claim:}
For $u>1$ there exist constants $A_1,B_1$ such that
$$
\E(|\dot{X}(s_1)\dot{X}(s_2)\dot{X}(s_3)|\mid \C)\leq (A_1+B_1u^3) E(h,k),
$$
with $E(h,k)=o(h^2k^2(h+k)^2)$ as $(h,k)\to(0,0)$.

\begin{proof}[Proof of the Claim.]
We have
\begin{equation}\label{eq:tr}
\E(|\dot{X}(s_1)\dot{X}(s_2)\dot{X}(s_3)|\mid \C)=
\E(|(Y_1+\alpha_1u)(Y_2+\alpha_2u)(Y_3+\alpha_3u)|)
\leq J_1+J_2u+J_3u^2+|\alpha_1\alpha_2\alpha_3|u^3,
\end{equation}
being
\begin{align*}
J_1&=\E(|Y_1Y_2Y_3|)
=\iiint_{\R^3}|y_1y_2y_3|
\frac1{(2\pi)^{3/2}\sqrt{\det(\Sigma_Y)}}e^{-\frac12y\Sigma_Y^{-1}y^t}dy,\\
J_2&=\sum_{i=1}^3|\alpha_i|\E\left(\prod_{k\neq i}^3|Y_k|\right)=\sum_{i=1}^3|\alpha_i|\iiint_{\R^3}\left(\prod_{k\neq i}^3|y_k|\right)
\frac1{(2\pi)^{3/2}\sqrt{\det(\Sigma_Y)}}e^{-\frac12y\Sigma_Y^{-1}y^t}dy,\\
J_3&=\sum_{i=1}^3\left(\prod_{k\neq i}^3|\alpha_k|\right)\E(|Y_i|)=\sum_{i=1}^3\left(\prod_{k\neq i}^3|\alpha_k|\right)\iiint_{\R^3}|y_i|
\frac1{(2\pi)^{3/2}\sqrt{\det(\Sigma_Y)}}e^{-\frac12y\Sigma_Y^{-1}y^t}dy.
\end{align*}
To deal with them we perform the change of variables
$$
y_j=\gamma_jz_j,\qquad \gamma_j=\prod_{i\neq j}^3(s_i-s_j)^2;\qquad j=1,2,3.
$$
Furthermore, we use the equivalents above and 
the fact that (see pg. 147 in \cite{aw})
$$
\det(\Sigma_Y)\sim K\prod_{1\leq i<j\leq 3}(s_j-s_i)^6.
$$

The first integral $J_1$ is the same as the one that
appears for the case $u=0$ in Prop. 5.10 in \cite{aw}: 
\begin{equation*}
J_1=\prod_{1\leq i<j\leq 3}(s_j-s_i)^8\iiint_{\R^3}|z_1z_2z_3|
\frac1{(2\pi)^{3/2}\sqrt{\det(\Sigma_Y)}}
e^{-\frac12 zGz'}dz
\sim Kh^5k^5(h+k)^5,
\end{equation*}
where $G=D\Sigma^{-1}_YD$ and $D=diag(\gamma_1,\gamma_2,\gamma_3)$. 
The second term $J_2$ is
\begin{multline*}
J_2=|\alpha_1|\iiint_{\R^3}|z_2z_3|
\frac1{(2\pi)^{3/2}\sqrt{\det(\Sigma_Y)}}e^{-\frac1{2\det(\Sigma_Y)}zGz'}dz
+|\alpha_2|\iiint_{\R^3}|z_1z_3|
\frac1{(2\pi)^{3/2}\sqrt{\det(\Sigma_Y)}}e^{-\frac1{2\det(\Sigma_Y)}zGz'}dz\\
+|\alpha_3|\iiint_{\R^3}|z_1z_2|
\frac1{(2\pi)^{3/2}\sqrt{\det(\Sigma_Y)}}e^{-\frac1{2\det(\Sigma_Y)}zGz'}dz\\
\sim K_1  (s_3-s_2)^5(s_2-s_1)^4(s_3-s_1)^4(2s_3-s_2-s_1)
\quad+K_2  (s_3-s_1)^5(s_3-s_2)^4(s_1-s_2)^4(2s_2-s_3-s_1)\\
\quad+K_3  (s_2-s_1)^5(s_1-s_3)^4(s_2-s_3)^4(2s_3-s_2-s_1)\\
=h^4k^4(h+k)^4[K_1k(2h+k)+K_2(h+k)(h-k)+K_3h(2h+k)].
\end{multline*}
The third term $J_3$ is
\begin{multline*}
J_3=|\alpha_2\alpha_3|\iiint_{\R^3}|z_1|
\frac1{(2\pi)^{3/2}\sqrt{\det(\Sigma_Y)}}e^{-\frac1{2\det(\Sigma_Y)}zGz'}dz
+|\alpha_1\alpha_3|\iiint_{\R^3}|z_2|
\frac1{(2\pi)^{3/2}\sqrt{\det(\Sigma_Y)}}e^{-\frac1{2\det(\Sigma_Y)}zGz'}dz\\
+|\alpha_1\alpha_2|\iiint_{\R^3}|z_3|
\frac1{(2\pi)^{3/2}\sqrt{\det(\Sigma_Y)}}e^{-\frac1{2\det(\Sigma_Y)}zGz'}dz\\
\sim h^3k^3(h+k)^3[K_1h(h+k)(h-k)(2k+h)
+K_2hk(2k+h)(2h+k)+K_3k(h+k)(2h+k)].
\end{multline*}
Finally,
$$
|\alpha_1\alpha_2\alpha_3|\sim K h^2k^2(h+k)^2(2k+h)(2h+k)(h-k).
$$
Gathering all these terms  
and definig $A_1=J_1$, $B_1=J_2+J_3+|\alpha_1\alpha_2\alpha_3|$ 
we conclude the proof of the Claim.
\end{proof}

In conclusion, 
we have
\begin{equation*}
A(0,h,h+k)\leq (A_1+B_1u^3)\frac{E(h,k)}{\sqrt{\Delta}}e^{-\frac12u^2},\quad(h,k)\to(0,0),
\end{equation*}
and $\frac{E(h,k)}{\sqrt{\Delta}}=o(hk(h+k))$ as $(h,k)\to(0,0)$. 
Consequently
$$
I_1\leq 
\int_0^{\delta}dk\int_{0}^{\delta}
(R-h-k)
(A_1+B_1u^3)\frac{E(h,k)}{\sqrt{\Delta}}e^{-\frac12u^2}
dhdk
\leq K\,R(A_1+B_1u^3)e^{-\frac12u^2},
$$

\noindent{\it Third step:} Second integral. 
We have $h<\delta<k\leq R$. Then, we expect $Y_1$ and $Y_2$ to be small,
as $s_1$ is close to $s_2$.

We proceed as in \eqref{eq:tr}. 
Besides, we have $\var Y_i\leq 1$ and 
when $h$ is bounded away from $0$ is easy to bound $\alpha_i$, $i=1,2,3$ 
by a constant, see \eqref{eq:DA}. 
Thus, also $J_1$, $J_2$ and $J_3$ are bounded by constants. 

The result follows provided the 
convergence of the integral at $h=0$. 

If we consider
$
\Delta=\Delta(h,k),
$
we have $$
\partial_h\Delta(0,k)=\partial_k\Delta(0,k)=\partial_{hk}\Delta(0,k)=\partial_{kk}\Delta(0,k)=0,
$$
and
$$
\Delta(h,k)\sim \left[1-r(k)^2-\dr(k)^2\right]h^2,\quad(h,k)\to(0,k).
$$

Now, the Taylor expansion gives
$$
\Delta\alpha_1\sim 
[(r(k)-1)\ddot{r}(k)-\dot{r}(k)^2-r(k)+1]\frac{h^3}2,
$$
that in view of the expansion of the determinant gives
$$
\alpha_1\sim C_1(k) h,
$$
where $C_1(k)$ is a continuous and bounded function of $k\geq\delta$.
{By similar computations} we have 
$$
\alpha_2\sim C_2(k) h,
$$
being $C_2(k)$ continuous and bounded on $k\geq\delta$. 
For the variance of $Y_i$, 
we have 
$
\var Y_1=1-(\alpha_{12}\dr_{12}+\alpha_{13}\dr_{13}).
$ 
We have
$$
\Delta
(\alpha_{12}\dr_{12}+\alpha_{13}\dr_{13})
={\left(1-r(k)^{2} - \dr(k)^{2} \right)} h^{2}-{\left(r(k) \dr(k) + \dr(k) \ddr(k)\right)} h^{3},
$$
so
$$
\alpha_{12}\dr_{12}+\alpha_{13}\dr_{13}
=1-
{
r(k) \dr(k) + \dr(k) \ddr(k)
\over
1-r(k)^2-\dr(k)^2
}
 h.
$$
So
$$
\var Y_1\sim C'_1(k)h.
$$
By similar arguments, we have 
$$
\var Y_2\sim C'_2(k) h.
$$
Both $C'_1$ and $C'_2$ are continuous and bounded on $k\geq\delta$. 
We have the bound
\begin{equation*}
\E(|\dot{X}(s_1)\dot{X}(s_2)\dot{X}(s_3)|\mid \C)\leq \prod_{k=1}^3\left(\E(|\dot{X}(s_i)|^3\mid\C)\right)^{1/3}.
\end{equation*}
The first two expectations depend on $h$, so, for $i=1,2$
$$
\E(|\dot{X}(s_i)|^3\mid\C)=\E|Y_i-\alpha_iu|^3\leq 8\var Y_i^{3/2}+8|\alpha_i|^3u^3
\sim K h^{3/2}+K'u^3h^3.
$$
Furthermore,
$$
\E(|\dot{X}(s_3)|^3\mid\C)=\E|Y_3-\alpha_3u|^3\leq K+K'u^3.
$$
As $\sqrt{\Delta}\sim \sqrt{1-r(k)^2-\dr(k)^2}h$ when $(h,k)\to (0,k)$, we obtain that for these values 
$$
A(0,h,h+k)\leq (K+K' u^3)e^{-u^2/2}.
$$
In conclusion 
$$
I_2\leq (A_2+B_2u^3)R^2e^{-u^2/2}.
$$\\

\noindent{\it Fourth step:} Third integral.  
We consider  the off-diagonal term.\\

\noindent{\bf Claim:}
When $\delta<\min(h,k)$, we have
\begin{equation}\label{eq:big}
\E(|\dot{X}(s_1)\dot{X}(s_2)\dot{X}(s_3)|\mid \C)\leq A_3+B_3u^3,
\end{equation}
where $A_3$, $B_3$ are constants, depending on $\delta$.

\begin{proof}[Proof of the Claim.]
\begin{equation*}
\E(|\dot{X}(s_1)\dot{X}(s_2)\dot{X}(s_3)|\mid \C)\leq \prod_{k=1}^3\left(\E(|\dot{X}(s_i)|^3\mid\C)\right)^{1/3}.
\end{equation*}
We have
$$
\E(|\dot{X}(s_i)|^3\mid\C)=\E|Y_i-\alpha_iu|^3\leq 8\E|Y_i|^3+8|\alpha_i|^3u^3.
$$
By (a) in Lemma \ref{lemma:1}, we know that $\var Y_i\leq 1$. 
We need some rough bounds.
Based on \eqref{eq:DA} 
and (a) in Lemma \ref{lemma:covariance},
we obtain that
$
|\Delta\alpha_i|\leq 16.
$
As in the domain of integration, there exists $\Delta_0>0$ such that $\Delta\geq\Delta_0$, 
we obtain that
$$
|\alpha_i|\leq{16\over\Delta_0},\quad i=1,2,3.
$$
This concludes the proof of the claim.
\end{proof}
Taking into account \eqref{eq:big} and the bound of the density of \eqref{lemma:density}:
$$
I_3\leq K
\int_{\delta}^{R}dk\int_{\delta}^{R-k}(R-h-k)(A+Bu^3)e^{-\frac12u^2}dh
\leq 
K R^3(A+Bu^3)e^{-\frac12u^2}.
$$

This concludes the proof of Lemma \ref{lemma:third}.
\end{proof}

\section{Proof of Theorem \ref{teo:vr}}\label{theorem3}
\begin{proof}
We begin the proof considering the up-crossings.
Then, relation \eqref{eq:key} reads as
$$
{\var U(T,u)\over \E\, U(T,u)}\to 1,
$$
uniformly in $T\in[t_0,\infty)$. 
The normalized number of up-crossings is 
$$
Z(T,u)={U(T,u)-\E U(T,u)\over\sqrt{\E U(T,u)}},
$$
and denote its probability distribution by
$$
F(T,u)=F_{Z(T,u)}(\cdot). 
$$
Denote by $\pi(F,G)$ the Prohorov distance between the probability measures induced by the distributions
$F$ and $G$ in $\R$.
Denote now by $N(\ell)$ a Poisson random variable with parameter $\ell$,
and 
$$
Y(\ell)={N(\ell)-\ell\over\sqrt{\ell}},
$$
and the corresponding distribution
$$
G(\ell)=G_{Y(\ell)}(\cdot).
$$
By the Central Limit Theorem for Poisson random variables, we know that
$$
\pi(G(\ell),\Phi)\to 0,\quad (\ell\to\infty), 
$$
where $\Phi$ stands for the standard normal distribution function. 
This means that there exists $\ell_n$ such that
$$
\pi(G(\ell),\Phi)
<1/(2n),\quad\textrm{ for }\ell\geq \ell_n.
$$
We have obtained a sequence $\ell_n\to\infty$.
We now observe that we are under the hypothesis of the Volkonski\u{\i}-Rozanov Theorem 
that states that, as processes indexed by $\ell>0$, we have
$$
U(\ell C(u)^{-1},u)\Rightarrow P(\ell),\quad (u\to\infty),
$$
where $P=\{P(\ell)\colon \ell\geq 0\}$ denotes a standard Poisson process. 
We then have, for each $n$, 
$$
U(\ell_nC(u)^{-1},u)\Rightarrow P(\ell_n),\quad (u\to\infty),
$$
that is equivalent to 
$$
{U(\ell_nC(u)^{-1},u)-\ell_n\over\sqrt{\ell_n}}\Rightarrow{P(\ell_n)-\ell_n\over\sqrt{\ell_n}}=Y
(\ell_n),\quad (u\to\infty),
$$
In consequence, there exists $u_n\to\infty$ such that
$$
\pi(F(\ell_nC(u)^{-1},u),G(\ell_n))<\frac1{2n},\text{ for }u\geq u_n.
$$
Denote $T_n=\ell_nC(u_n)^{-1}$. 
As $\ell_n$ and $u_n$ are increasing it follows that $T_n\to\infty$. 
We obtain that
$$
\pi(F(T_n,u_n),\Phi)\leq\pi(F(T_n,u_n),G(\ell_n))+\pi(G(\ell_n),\Phi)<\frac1n,
$$
concluding the proof for up-crossings.
Now, based on Remark \ref{remark:up}, we have
$$
\frac{N(T_n,u_n)-\lambda(T_n,u_n)}{\sqrt{2\lambda(T_n,u_n)}}
={U(T_n,u_n)-\E U(T_n,u_n)\over\sqrt{\E U(T_n,u_n)}}+
{r(T_n,u_n)\over\sqrt{\E U(T_n,u_n)}},
$$
where $|r(T_n,u_n)|\leq 1/2$. As $\E U(T_n,u_n)\to\infty$, the result follows.
\end{proof}

%













\end{document}